\newcommand{\lusim}[1]{\smash{\underset{\raisebox{1.2pt}[0cm][0cm]{$\sim$}}
{{#1}}}}
\newtheorem{theorem}{Theorem}[section]
\newtheorem{lemma}[theorem]{Lemma}
\newtheorem{proposition}[theorem]{Proposition}
\newtheorem{corollary}[theorem]{Corollary}
\theoremstyle{definition}
\newtheorem{definition}[theorem]{Definition}
\theoremstyle{remark}
\newtheorem{remark}[theorem]{Remark}
\newtheorem{question}[theorem]{Question}
\title[Completeness of the provability logic GL]{Completeness of the G\"odel-L\"ob provability logic for the  filter sequence of normal measures}
\author[M. Golshani and R. Zoghifard]{Mohammad  Golshani and Reihane Zoghifard}
\thanks{%
 The first author's research has been supported by a grant from
  IPM (No. 1400030417)  and
Iran National Science Foundation (INSF) (No. 98008254). He thanks Moti Gitik for his very helpful discussions and ideas for the result of Section \ref{proofmaintheorem}.}
\address{
School of Mathematics, Institute for Research in Fundamental Sciences (IPM), P.O. Box:
19395-5746, Tehran-Iran. }
\email{golshani.m@gmail.com}
\address{
School of Mathematics, Institute for Research in Fundamental Sciences (IPM), P.O. Box:
19395-5746, Tehran-Iran. }
\email{r.zoghi@gmail.com}
\subjclass[]{}
\keywords{provability logic, measurable cardinal, normal measure}
\begin{document}
\begin{abstract}
Assuming the existence of  suitable large cardinals, we show it is consistent that the Provability logic $\mathbf{GL}$ is complete with respect to the filter sequence of normal measures. This result answers a question of Andreas Blass
from 1990 and a related question of Beklemishev and  Joosten.
\end{abstract}
 \maketitle

\section{Introduction}

The G\"odel-L\"ob provability logic $\mathbf{GL}$ deals with the study of modality $\Box$ interpreted as the provability predicate in any formal theory $T$ that can describe the arithmetic of natural numbers, such as Peano arithmetic;
$\Box\varphi$ is read as $\varphi$ is provable in $T$.
It is proved by Segerberg \cite{segerberg} that $\mathbf{GL}$ is sound and complete with respect to the class of all transitive and conversely well-founded Kripke frames. In fact, it is adequate to consider frames that are finite transitive irreflexive trees.
Afterward, Esakia \cite{esakia} perceived that the modal operator $\lozenge$, interpreted as consistency in $T$, has the same behavior as the derivative operator in topological scattered spaces.
Then he proved that $\mathbf{GL}$ is (strongly) complete with respect to the class of all scattered spaces.

In 1990,  Blass \cite{blass} improved Esakia's result.  Instead of topological description, he interpreted modal operators over filters associated with specific uncountable cardinals, which is a  most natural viewpoint in set theory.
He showed the soundness of $\mathbf{GL}$ concerning some natural classes of filters.
Then he studied the completeness of $\mathbf{GL}$ for two classes of these filters: end-segment filters and closed unbounded (club) filters.
He proved that (in $ZFC$)  $\mathbf{GL}$ is complete concerning the end-segment filters.
His first completeness result implies the completeness of $\mathbf{GL}$ with respect to any ordinal $\alpha \geq \omega^\omega$ equipped with the interval (order) topology. This result was  independently proved by Abashidze \cite{abashidze87}.
Investigating the class of club filter, Blass proved
the completeness of $\mathbf{GL}$
by assuming the G\"{o}del's axiom of constructibility or, more precisely, Jensen's square principle $\Box_\kappa$ for all uncountable cardinals $\kappa < \aleph_\omega$. Building on some deep results of Harrington and Shelah \cite{shelah}, he also showed that the incompleteness of $\mathbf{GL}$ for club filters is equiconsistent with the existence of a Mahlo cardinal.

Abashidze-Blass theorem launches a new line of research for investigating the completeness of provability logic $\mathbf{GL}$ and also its polymodal extensions $\mathbf{GLP}$ with respect to the natural topologies on ordinals, e.g., see \cite{bagaria2019,beklemishev2013,beklemishev2014}.

In this paper, we answer a question of Blass \cite{blass}  by showing that
the provability logic $\mathbf{GL}$  consistently can be complete with respect to the filter sequence of normal measures.
For each ordinal  $\eta$ let
\begin{center}
$\mathcal{M}_\eta=\bigcap \{\mathcal{U}: \mathcal{U}$ is a normal measure on $\eta \}.$
\end{center}
Note that $\mathcal{M}_\eta$ is proper iff $\eta$ is a measurable cardinal, in which case $\mathcal{M}_\eta$
is a normal $\kappa$-complete filter on $\eta$. Also, it is easily seen that $X \subseteq \eta$
has positive measure with respect to $\mathcal{M}_\eta$ iff for at least one normal measure $\mathcal{U}$
on $\eta$ we have $X \in \mathcal{U}.$

We prove the following theorem.
\begin{theorem}
\label{maintheorem}
Assume there are infinitely many strong cardinals. Then there exists a generic extension of
the canonical core model in which the provability logic $\mathbf{GL}$ is complete with respect to
the filter sequence $\langle \mathcal{M}_\eta: \eta \in On \rangle$.
\end{theorem}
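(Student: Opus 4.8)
The plan is to split the statement into its two halves and reduce the substantive one to a purely combinatorial realization problem. Soundness of $\mathbf{GL}$ for the semantics given by $\langle\mathcal{M}_\eta:\eta\in On\rangle$ --- every theorem of $\mathbf{GL}$ is true at every ordinal under every valuation --- follows from Blass's general soundness theorem for normal filters together with the fact, recorded above, that $\mathcal{M}_\eta$ is a normal, $\eta$-complete filter when $\eta$ is measurable and is improper otherwise; the improper case makes $\Box\varphi$ vacuously true, matching the $\mathbf{GL}$-provable implication $\Box\bot\to\Box\varphi$, while well-foundedness of $On$ supplies the L\"ob axiom. So I concentrate on completeness, which by contraposition asks that every $\mathbf{GL}$-consistent formula be satisfied at some ordinal. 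By Segerberg's theorem such a formula is satisfied at the root of some finite transitive irreflexive tree $T$, so it suffices to realize every such $T$ inside the filter sequence: to produce a region of ordinals and a valuation so that, writing $d(A)=\{\eta:A\cap\eta\text{ has positive }\mathcal{M}_\eta\text{-measure}\}$ for the associated derivative, the assignment of ordinals to tree-nodes commutes with $d$ and tree-accessibility (a $d$-map), with a designated ordinal landing on the root.

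This requires a dictionary between the tree and the measure structure, after which I would build the realization by recursion on $T$. Write $o(\eta)$ for the Mitchell rank of $\eta$. Leaves of $T$ are realized by non-measurable ordinals, where $\Box\bot$ holds because $\mathcal{M}_\eta$ is improper; the operators read as $\Box\varphi\equiv$ ``$\{\xi:\xi\models\varphi\}$ has $\mathcal{M}_\eta$-measure one'' and $\lozenge\varphi\equiv$ ``$\{\xi:\xi\models\varphi\}$ lies in some normal measure on $\eta$''. Two features are decisive. Depth is governed by the Mitchell rank: a normal measure of $\triangleleft$-rank $\beta$ concentrates on $\{\alpha<\eta:o(\alpha)=\beta\}$, so iterating $\lozenge$ descends through the Mitchell ranks exactly as tree-accessibility descends through the levels of $T$. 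Branching is governed by the number of distinct measures: a unique normal measure forces $\Box\varphi\leftrightarrow\lozenge\varphi$ and permits no genuine branching, whereas $k$ pairwise distinct measures, suitably placed, split the points below $\eta$ into $k$ positive-measure pieces carrying independent truth values. Accordingly I would assign to each internal node with children $c_1,\dots,c_k$ a measurable $\eta$ carrying normal measures $\mathcal{U}_1,\dots,\mathcal{U}_k$, with $\mathcal{U}_i$ concentrating on a positive-measure set already chosen to realize the subtree below $c_i$ (of equal $\triangleleft$-rank when two children have equal height but distinct labels, of differing rank otherwise); the recursion is on the height of $T$, and the valuation is read off the node labels.

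The realization demands a model in which such configurations occur cofinally and with no spurious measures, and this is the role of the hypothesis, the core model, and the forcing. From ``infinitely many strong cardinals'' I would pass to the canonical core model $K$, which inherits the strong cardinals by covering and has a transparent measure structure: $\mathrm{GCH}$ holds, the Mitchell order is well understood, and the normal measures of each $\eta$ are exactly the canonical ones, so $\mathcal{M}_\eta$ is computed without surprises. A single strong cardinal already carries measurables of every finite Mitchell rank below it, and the infinitely many strongs furnish infinitely many disjoint regions --- one to host each of the countably many finite trees, the deeper trees assigned to higher regions. Over $K$ I would then run a Gitik-style forcing that, within each region, adjusts the measures of the designated measurables so as to install precisely the branching pattern the recursion requires: adding the needed pairwise-distinct measures of each prescribed $\triangleleft$-rank and concentration while killing every measure not called for, so that in the extension the derivative $d$ on each region is isomorphic to tree-accessibility and every finite $T$ is realized.

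The main obstacle is exactly this last point: controlling the measures of the filter sequence with surgical precision under forcing. What makes $\mathcal{M}_\eta$ delicate is that it is an intersection, so its positive-measure sets --- hence $d$ and the whole semantics --- depend on the entire family of normal measures on $\eta$, not on any single one, and one unintended measure can alter which sets count as positive and open a spurious successor direction. The heart of the argument is therefore a Gitik-type normal-measure-counting analysis verifying, in the generic extension over $K$, both directions at once: that each designated $\eta$ carries exactly the intended measures, with the intended $\triangleleft$-ranks and concentration sets, and that it carries no others, uniformly across the class of designated ordinals so that the global filter sequence has the required shape. Establishing this exact count --- existence of the wanted measures from the strong-cardinal resource together with non-existence of extraneous ones --- is the crux; once it is in hand, the recursive realization yields a $d$-map onto every finite tree and hence the completeness of $\mathbf{GL}$.
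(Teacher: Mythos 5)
Your skeleton matches the paper's in spirit (soundness quoted from Blass; completeness reduced to realizing trees by an assignment of sets of ordinals to nodes; exact control of normal measures arranged by forcing over the core model), but the realization scheme, as written, is inconsistent with ZFC. You assign to an internal node with immediate children $c_1,\dots,c_k$ a measurable $\eta$ carrying \emph{exactly} the normal measures $\mathcal{U}_1,\dots,\mathcal{U}_k$, with $\mathcal{U}_i$ concentrating on the measurables realizing the subtree below $c_i$, and you plan to ``kill every measure not called for.'' Take $T$ to be the three-element chain: your recursion then demands a measurable $\eta$ whose unique normal measure concentrates on measurables. This is provably impossible: by well-foundedness of the Mitchell order, some normal measure on $\eta$ has Mitchell rank $0$, while any measure concentrating on measurables has rank $\geq 1$ (a normal measure on $\eta$ belonging to the ultrapower is a genuine normal measure on $\eta$, since the ultrapower computes $\mathcal{P}(\eta)$ correctly); equivalently, as Blass observed and the paper recalls, validity of L\"ob's axiom already forces every measurable to carry a measure concentrating on the non-measurables. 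No forcing can remove this measure. Since accessibility in $T$ is transitive, the correct bookkeeping must supply a (distinct) measure for \emph{every} node strictly below the given one, not just for its immediate children --- at which point the Mitchell order of $\eta$ has to copy the entire subtree under the root. That is exactly what the paper does, and it does so not for finite trees but for Blass's infinite trees $\mathbf{K}_n$ (each node of height $i$ has infinitely many successors of every height $j<i$), realized at a \emph{single} cardinal $\kappa_n$ by forcing $\lhd(\kappa_n)\simeq \mathbf{K}_n\setminus\{\langle\rangle\}$ and then invoking Lemma \ref{reduction lemma to trees}.

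The second gap is that what you correctly identify as the crux --- ``each designated $\eta$ carries exactly the intended measures and no others'' --- is named but not proved, and the device the paper uses to avoid having to force most of it is absent from your plan. Exact prescription of the Mitchell order at a cardinal is Ben-Neria's theorem (Theorem \ref{structure of mitchell orders-main}), a deep result requiring a strong cardinal and a generic extension of $L[E]$; handling infinitely many $\kappa_n$ simultaneously further requires a Magidor-style iteration of Prikry-type forcings (Theorem \ref{structure of mitchell orders}), since naive supports collapse cardinals. More importantly, the exactness you want \emph{below} the designated cardinals --- that the ordinals $\nu$ realizing a node carry no spurious measures and that their measures concentrate on the right realizing sets --- is not forced at all in the paper: it is \emph{derived} from exactness at $\kappa_n$ by \L{}o\'{s}-style ultrapower arguments (Lemmas \ref{representationn1}--\ref{representationn5}); a spurious measure on positive-measure many $\nu$ would integrate to a spurious measure below $\mathcal{U}(s)$ in the Mitchell order, and the realizing sets $C_s$ are defined top-down as measure-one reflection sets rather than chosen bottom-up in advance, which also removes the chicken-and-egg problem in your recursion (you need a measure concentrating on a pre-chosen set, but such sets can only be identified after the measures are fixed). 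Without Ben-Neria's theorem, the iteration argument, and the reflection lemmas, your proposal is a correct identification of the difficulty rather than a proof.
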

\begin{remark}
As it is shown in \cite{blass}, some large cardinals are needed to get the result; indeed the consistency of the statement
implies the existence of inner models for measurable cardinals $\kappa$ with $o(\kappa) \geq n,$ for all $n<\omega.$
\end{remark}
As a corollary, we obtain the following, which answers Question 16 from \cite{joosten}.
\begin{corollary}
\label{corollary}
Assuming the existence of infinitely many strong cardinals $\langle \kappa_n: n<\omega \rangle$, it is consistent that  $\mathbf{GL}$ is complete with respect to the
 ordinal space  $(\alpha, \tau_{M})$, where $\alpha\geq \sup_{n<\omega}\kappa_n$ and $\tau_M$ is the  topology corresponding to the filter sequence of normal measures $\vec{\mathcal{M}}_\kappa$ (see Section \ref{concludingremarks}).
\end{corollary}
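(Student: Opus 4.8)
The plan is to deduce the topological completeness of Corollary \ref{corollary} from the filter-sequence completeness of Theorem \ref{maintheorem} by means of the standard translation between the derived-set (Cantor--Bendixson) semantics of a scattered space and the filter semantics for $\mathbf{GL}$. First I would fix the topology $\tau_M$ on $\alpha$ precisely: declare $U \subseteq \alpha$ open exactly when $U \cap \eta \in \mathcal{M}_\eta$ for every $\eta \in U$, so that a neighbourhood base at a point $\eta$ is given by the sets $\{\eta\} \cup A$ with $A \in \mathcal{M}_\eta$. When $\eta$ is not measurable, $\mathcal{M}_\eta$ is improper and $\{\eta\}$ is open, hence $\eta$ is isolated; when $\eta$ is measurable, $\eta$ is a genuine limit point. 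Under this definition the derived-set operator $d$ satisfies $\eta \in d(X) \iff X \cap \eta \in \mathcal{M}_\eta^{+}$, because $\eta$ is a limit point of $X$ precisely when $X \cap \eta$ meets every member of $\mathcal{M}_\eta$, which for a filter is exactly the statement that $X \cap \eta$ is positive.

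The key lemma I would then prove is that the two semantics coincide pointwise. Fixing any valuation $v$ of the propositional variables as subsets of $\alpha$, an induction on the complexity of $\varphi$ shows that $\eta \in [\![\varphi]\!]^{\tau_M}_v \iff \eta \Vdash_v \varphi$ in the filter semantics of $\vec{\mathcal{M}}_\kappa$, the only nontrivial step being the modal clause: $\eta \Vdash_v \Diamond\varphi$ iff $[\![\varphi]\!]_v \cap \eta \in \mathcal{M}_\eta^{+}$ iff $\eta \in d([\![\varphi]\!]_v)$, i.e.\ $\eta \in [\![\Diamond\varphi]\!]^{\tau_M}_v$. Crucially, membership in $\mathcal{M}_\eta$ depends only on subsets of $\eta$, so the truth value of $\varphi$ at an ordinal $\eta < \alpha$ is computed from the restriction $v \restriction \eta$ and is unaffected by passing to the ambient space $(\alpha,\tau_M)$; the translation is therefore absolute between $(\alpha,\tau_M)$ and the full filter sequence.

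Soundness of $\mathbf{GL}$ for $(\alpha,\tau_M)$ is immediate, since the least element of any nonempty $S \subseteq \alpha$ is isolated in the subspace $S$ (take the basic neighbourhood $\{\eta\}\cup\eta$ of $\eta=\min S$), so $(\alpha,\tau_M)$ is scattered and $\mathbf{GL}$ is sound for every scattered space. For completeness, let $\varphi \notin \mathbf{GL}$. By Theorem \ref{maintheorem} there is a filter-valuation and an ordinal refuting $\varphi$; the point I would emphasise is that the refutations produced in the proof of Theorem \ref{maintheorem} are built from the measurable cardinals obtained from $\langle \kappa_n : n<\omega\rangle$ together with their Mitchell orders, and a non-theorem of $\mathbf{GL}$ failing on a finite tree of height $h$ is already refuted at an ordinal of order of measurability $\geq h$, hence below $\sup_{n<\omega}\kappa_n \leq \alpha$. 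Restricting that valuation to $\alpha$ and invoking the coincidence lemma, $\varphi$ fails at this ordinal in $(\alpha,\tau_M)$, so every non-theorem of $\mathbf{GL}$ is refuted in the space and completeness follows.

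The hard part, as I see it, is not the topological translation---which is routine---but the bookkeeping needed to guarantee that every $\mathbf{GL}$-refutation already occurs below $\sup_{n<\omega}\kappa_n$. This amounts to reading off from the construction behind Theorem \ref{maintheorem} that the measurables realising each finite Mitchell order are arranged cofinally below the supremum of the former strong cardinals, so that the finitely high trees witnessing all non-theorems embed into the derived-set structure of $(\alpha,\tau_M)$ below $\alpha$. Once this localisation is verified, the corollary is an immediate consequence of the coincidence of the filter and topological semantics.
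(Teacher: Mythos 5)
Your proposal is correct and takes essentially the same route as the paper: the paper also derives Corollary \ref{corollary} from Theorem \ref{maintheorem} by identifying the derived-set semantics of $(\alpha,\tau_M)$ with the filter semantics of $\vec{\mathcal{M}}$ (its remark that $d(A)=\{\eta: A\cap\eta \text{ has positive measure w.r.t. } \mathcal{M}_\eta\}$, so that $\rho=o$), combined with the fact that the refuting valuations built in Section \ref{proofmaintheorem} are supported below $\sup_{n<\omega}\kappa_n\leq\alpha$. Your additional points (scatteredness for soundness, the neighbourhood-base description, and the localisation of refutations at $\kappa_n$) simply make explicit what the paper leaves implicit, the neighbourhood-base claim being justified by Blass's lemma that $A\in\mathcal{M}_\eta$ implies $\{\beta<\eta: A\cap\beta\in\mathcal{M}_\beta\}\in\mathcal{M}_\eta$.
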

The paper is organized as follows. In Section \ref{preliminaries} we collect some definitions and facts from provability logic and set theory, and then in Section \ref{proofmaintheorem} we complete the proof of
Theorem  \ref{maintheorem}. In the last Section \ref{concludingremarks}, we discuss the problem of strong completeness of  $\mathbf{GL}$ with respect to
the filter sequence of normal measures and conclude with some remarks.

\section{Some preliminaries}\label{preliminaries}


\subsection{Preliminaries from provability logic}

Let $\mathbb{P}$ be a set of propositional variables. The syntax of modal logic is obtained by adding the modal operator $\Box$ to propositional logic. So if $\varphi$ is a formula, then $\Box\varphi$ is a formula. As usual, $\lozenge$ is used as a shorthand for $\neg\Box\neg$ and $\bot$ for the logical constant ``false''.

The system $\mathbf{GL}$ is defined by the following axioms schemata and rules:
\begin{itemize}
	\item propositional tautologies,
	\item K. $\Box(\varphi\rightarrow\psi) \rightarrow (\Box\varphi\rightarrow\Box\psi)$,
	\item L\"ob. $\Box(\Box\varphi\rightarrow \varphi) \rightarrow\Box \varphi$,
	\item MP. $\vdash\varphi,\ \vdash \varphi\rightarrow\psi \Rightarrow \vdash \psi$,
	\item Nec. $\vdash\varphi \Rightarrow \Box\varphi$.
\end{itemize}

A Kripke frame is a pair $\mathfrak{F}=(W,R)$ where $W$ is a non-empty set
and
$R\subseteq W\times W$ is an accessibility relation. A Kripke model is a triple
$\mathfrak{M}=(W,R,\nu)$ where $\nu$ is a valuation function which assigns to each $p\in \mathbb{P}$ a subset of $W$. The valuation function $\nu$ is extended to all formulas as follows:
\begin{align*}
\nu(\neg\varphi)&= W - \nu(\varphi)\\
\nu(\varphi \wedge \psi) &= \nu(\varphi)\cap \nu(\psi)\\
\nu(\Box\varphi)&= \{w\in W\ |\ (\forall v\in W)\ wRv\rightarrow v\in \nu(\varphi)\}
\end{align*}
A formula $\varphi$ is valid in $\mathfrak{M}$ if $\nu(\varphi)=W$, also it is valid in $\mathfrak{F}$ if it is valid in every model based on $\mathfrak{F}$.

\begin{proposition}(Segerberg \cite{segerberg})
	\label{Kripke complete}
	$\mathbf{GL}$ is complete with respect to the class of all finite transitive irreflexive trees.
\end{proposition}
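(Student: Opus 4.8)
The plan is to prove the nontrivial (completeness) direction by contraposition: given a formula $\varphi$ with $\not\vdash_{\mathbf{GL}}\varphi$, I would manufacture a finite transitive irreflexive tree on which $\varphi$ fails. The engine is a finite model property for $\mathbf{GL}$, obtained by a filtration-style construction designed so that the L\"ob axiom forces irreflexivity, followed by an unravelling into a tree. So I begin by assuming $\neg\varphi$ is $\mathbf{GL}$-consistent.

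First I would fix the finite set $\Sigma$ of all subformulas of $\varphi$, closed under single negations, and let $W$ be the finite collection of maximal $\mathbf{GL}$-consistent subsets of $\Sigma$ (the atoms deciding every member of $\Sigma$). On $W$ I would declare $X R Y$ to hold exactly when (i) for every $\Box\psi\in\Sigma$, if $\Box\psi\in X$ then $\psi\in Y$ and $\Box\psi\in Y$, and (ii) there is some $\Box\chi\in\Sigma$ with $\Box\chi\in Y\setminus X$. Clause (i) makes the boxed formulas of $\Sigma$ increase weakly along $R$ and yields transitivity, while clause (ii) forces strict increase; since $\Sigma$ is finite this delivers irreflexivity at once, and on a finite transitive frame irreflexivity and converse well-foundedness coincide, so $(W,R)$ is a strict partial order. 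The valuation is $\nu(p)=\{X : p\in X\}$ for each propositional variable $p$ occurring in $\varphi$.

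The heart of the matter is the Truth Lemma: for every $X\in W$ and every $\psi\in\Sigma$, one has $\psi\in X$ iff $\psi$ holds at $X$ in the model $(W,R,\nu)$. The Boolean cases are routine and the forward direction of the modal case is immediate from clause (i). The delicate point, which I expect to be the main obstacle, is the backward direction of the modal case: if $\Box\psi\notin X$, I must exhibit a successor $Y$ of $X$ with $\psi\notin Y$. Here I would show that the set $\{\neg\psi,\Box\psi\}\cup\{\chi,\Box\chi : \Box\chi\in X\}$ is $\mathbf{GL}$-consistent; were it inconsistent, one would have $\vdash\bigwedge_{\Box\chi\in X}(\chi\wedge\Box\chi)\wedge\Box\psi\to\psi$, and pushing the implication $\Box\psi\to\psi$ under a box by necessitation and the axiom $\mathrm{K}$, then applying \emph{crucially} the L\"ob axiom $\Box(\Box\psi\to\psi)\to\Box\psi$ together with the derived transitivity axiom, would yield $\vdash\bigwedge_{\Box\chi\in X}\Box\chi\to\Box\psi$ and hence $\Box\psi\in X$, a contradiction. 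Extending the consistent set to an atom $Y$ then gives the required successor: clause (i) holds by construction, clause (ii) is witnessed by $\Box\psi\in Y\setminus X$, and $\neg\psi\in Y$ forces $\psi\notin Y$.

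With the Truth Lemma in hand the rest is bookkeeping. Since $\neg\varphi$ is consistent it lies in some atom $X_0$, so $\varphi$ fails at $X_0$ in the finite transitive irreflexive model $(W,R,\nu)$. To pass to a tree I would unravel the frame from $X_0$: take as points the finite $R$-paths issuing from $X_0$, relate each path to its proper extensions, and transport the valuation along endpoints. Converse well-foundedness bounds the length of paths and finiteness of $W$ bounds their number, so the unravelling is a finite transitive irreflexive tree, and the map sending a path to its last coordinate is a p-morphism onto $(W,R,\nu)$; as validity is preserved backwards along p-morphisms, $\varphi$ is refuted at the root of this tree, which completes the argument.
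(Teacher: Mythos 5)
Your proposal is correct. Note, however, that the paper does not prove this proposition at all: it is quoted as a known result with a citation to Segerberg, and the paper's machinery (the trees $\mathbf{K}_n$, Lemma \ref{reduction lemma to trees}, the measure-theoretic constructions) takes it as a black box. So there is no in-paper argument to compare against; what you have written is the standard textbook proof, and it is sound. The atom construction over the negation-closed set $\Sigma$ of subformulas, the accessibility relation combining weak persistence of boxed formulas (clause (i)) with a strict-increase witness (clause (ii)), and the L\"ob-based consistency argument for the hard direction of the Truth Lemma are exactly the classical route; you correctly include the two points where such proofs most often go wrong, namely adding $\Box\psi$ itself to the candidate successor set (which simultaneously witnesses clause (ii) and makes the L\"ob step work) and invoking the derived axiom $\Box\chi\to\Box\Box\chi$ to get $\bigwedge_{\Box\chi\in X}\Box\chi\to\Box\bigwedge_{\Box\chi\in X}(\chi\wedge\Box\chi)$. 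The final unravelling into the tree of finite $R$-paths with the last-coordinate map as a bounded morphism is also correct, and matches the paper's later (unproved) remark that validity is preserved under bounded morphic images.
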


The Kripke completeness of $\mathbf{GL}$ can facilitate the method of proving the other completeness results.
To be more precise, for a given class of structures, instead of proving the completeness directly, one can find a way
to transform the validity from this class to the class of Kripke frames.
This idea is also used by Blass to give a sufficient condition for the completeness of $\mathbf{GL}$ with respect to any family of filters.
To this end, a particular class of trees named $\mathbf{K}_n$ is considered in \cite{blass} as a crossing point between these two classes.

For each fixed natural number $n$, the nodes of $\mathbf{K}_n$ consists of all finite sequences of pairs  $\langle (i_1,j_1), \dots,(i_k,j_k)\rangle$ where
$n>i_1>\dots> i_k\geq 0$ and $j_1,\dots,j_k\in\omega$ are arbitrary. The order of $\mathbf{K}_n$, denoted by $\lhd$, is the end extension order, thus $t$ extends $s$
iff $s \lhd t$.
So the root of  $\mathbf{K}_n$  is the empty sequence $\langle\rangle$, and the  height of the tree is $n$.
Also, each node with height $0<i \leq n$ has infinitely many immediate successors
of height $j$ for each $j<i$.

It is easy to see that each finite transitive tree $(W,R)$ with height $n$ is a bounded morphic image of $\mathbf{K}_n$. That is, there is an onto function $f$ from the nodes of  $\mathbf{K}_n$ to $W$ such that for any $s,t\in \mathbf{K}_n$ and $w\in W$ we have
\begin{itemize}
	\item $s \lhd t$ implies $f(s)R f(t)$,
	\item if $f(s)R w$, then there is $t\in \mathbf{K}_n$ such that $s \lhd t$ and $f(t)=w$.
\end{itemize}
It is easy to see that the validity of formulas is preserved under bounded morphic images.
So, by  Proposition \ref{Kripke complete} we have
\begin{itemize}
	\item if $\mathbf{GL}\vdash\varphi$, then $\varphi$ is valid in $\mathbf{K}_n$  for every $n$.
\end{itemize}

Suppose that
$\vec{\mathcal{F}}=\langle   \mathcal{F}_\alpha: \alpha \in On        \rangle$
is a family of filters where $\mathcal{F}_\alpha$ is a filter on $\alpha$, for each $\alpha\in On$.
A valuation $\nu$ on this family is a function which assigns a class of ordinals to each $p\in \mathbb{P}$.
Then the valuation function $\nu$ is extended to all formulas by the standard rules for Boolean connectives and the following for $\Box$ operator:
\begin{align*}
\nu(\Box\varphi)&= \{\alpha\ |\ \nu(\varphi)\in \mathcal{F}_\alpha\}.
\end{align*}
Then for the dual operator $\lozenge$ we have
$$\nu(\lozenge\varphi)=\{\alpha\ |\ \nu(\varphi) \text{ has positive measure w.r.t } \mathcal{F}_\alpha\}.$$

A formula $\varphi$ is $\vec{\mathcal{F}}$-valid if for every valuation $\nu$ on $\vec{\mathcal{F}}$ we have $\nu(\varphi)=On$.

In this paper, we are interested in the filter sequence of normal measures  $\vec{\mathcal{M}}=\langle \mathcal{M}_\alpha: \alpha \in On    \rangle,$ where for each $\alpha, $ $\mathcal{M}_\alpha$ is the intersection of all normal measures on $\alpha$.

Note that in $\vec{\mathcal{M}}$,
the formula $\lozenge \top$ determines the class of all measurable cardinals, reciprocally,
$\Box\bot$ defines the class of all non-measurable ordinals.
Furthermore, $\lozenge^n \top$ is true at an ordinal $\alpha$ if and only if $\alpha$ is a measurable cardinal with Mitchell order $\geq n$ (see Definition \ref{morder4}).

By showing that for any $\alpha$ and $A$ if
$A\in\mathcal{M}_\alpha$, then
$\{\beta<\alpha: A\cap\beta\in\mathcal{M}_\beta\}\in\mathcal{M}_\alpha$; Blass proved the following soundness theorem.
\begin{proposition}\cite[Theorem 2]{blass}
	$\mathbf{GL}$ is sound with respect to the class of normal filters $\vec{\mathcal{M}}$.
\end{proposition}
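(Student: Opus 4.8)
The plan is to verify that $\mathbf{GL}$ is sound for $\vec{\mathcal{M}}$ in the standard way for a modal system: show that each axiom is $\vec{\mathcal{M}}$-valid and that the two inference rules preserve $\vec{\mathcal{M}}$-validity, reading throughout the clause ``$\nu(\varphi)\in\mathcal{M}_\alpha$'' as $\nu(\varphi)\cap\alpha\in\mathcal{M}_\alpha$ so that it refers to the filter \emph{on} $\alpha$. The propositional tautologies are immediate, since the Boolean clauses interpret the connectives by complementation in $On$, intersection and union, so a tautology evaluates to $On$ ordinal-by-ordinal. For modus ponens, if $\nu(\varphi)=\nu(\varphi\to\psi)=On$ then, as $On\setminus\nu(\varphi)=\emptyset$, we get $\nu(\psi)=(On\setminus\nu(\varphi))\cup\nu(\psi)=\nu(\varphi\to\psi)=On$. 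For necessitation, if $\nu(\varphi)=On$ then for every $\alpha$ we have $\nu(\varphi)\cap\alpha=\alpha\in\mathcal{M}_\alpha$ (every filter, including the improper filter $P(\alpha)$ attached to a non-measurable $\alpha$, contains the whole space), so $\nu(\Box\varphi)=On$.

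For the distribution axiom K, I would use that each $\mathcal{M}_\alpha$ is a filter: if $\alpha$ lies in both $\nu(\Box(\varphi\to\psi))$ and $\nu(\Box\varphi)$, then $\nu(\varphi\to\psi)\cap\alpha$ and $\nu(\varphi)\cap\alpha$ are in $\mathcal{M}_\alpha$, hence so is their intersection, which is contained in $\nu(\psi)\cap\alpha$; upward closure then yields $\alpha\in\nu(\Box\psi)$, so $\nu(\mathrm{K})=On$.

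The only substantial axiom is L\"ob, and here I would exploit the reflection property $(\star)$: for all $\alpha$ and $A$, $A\in\mathcal{M}_\alpha$ implies $\{\beta<\alpha:A\cap\beta\in\mathcal{M}_\beta\}\in\mathcal{M}_\alpha$. First I would extract from $(\star)$ the transitivity schema $\nu(\Box\psi)\subseteq\nu(\Box\Box\psi)$: if $\nu(\psi)\cap\alpha\in\mathcal{M}_\alpha$, applying $(\star)$ to $A=\nu(\psi)\cap\alpha$ and noting $A\cap\beta=\nu(\psi)\cap\beta$ for $\beta<\alpha$ shows $\nu(\Box\psi)\cap\alpha=\{\beta<\alpha:\nu(\psi)\cap\beta\in\mathcal{M}_\beta\}\in\mathcal{M}_\alpha$. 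To validate L\"ob I would then argue by minimal counterexample, using the well-foundedness of $On$. Set $T=\{\beta:\beta\in\nu(\Box(\Box\varphi\to\varphi))\text{ and }\beta\notin\nu(\Box\varphi)\}$ and suppose $\beta_0=\min T$ exists. Transitivity applied to $\psi=\Box\varphi\to\varphi$ gives $\nu(\Box(\Box\varphi\to\varphi))\cap\beta_0\in\mathcal{M}_{\beta_0}$, while minimality of $\beta_0$ yields $\nu(\Box(\Box\varphi\to\varphi))\cap\beta_0\subseteq\nu(\Box\varphi)\cap\beta_0$; hence $\nu(\Box\varphi)\cap\beta_0\in\mathcal{M}_{\beta_0}$. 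Intersecting this with the hypothesis $\nu(\Box\varphi\to\varphi)\cap\beta_0\in\mathcal{M}_{\beta_0}$ and using $\nu(\Box\varphi\to\varphi)\cap\nu(\Box\varphi)\subseteq\nu(\varphi)$ forces $\nu(\varphi)\cap\beta_0\in\mathcal{M}_{\beta_0}$, i.e. $\beta_0\in\nu(\Box\varphi)$, contradicting $\beta_0\in T$. Thus $T=\emptyset$ and the L\"ob instance is valid at every ordinal.

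The real weight of the argument sits in $(\star)$ itself, which is the step I expect to be the main obstacle. I would prove it one normal measure at a time: fixing a normal measure $\mathcal{U}$ on $\alpha$ with ultrapower $j\colon V\to M$ and critical point $\alpha$, it suffices to check $\alpha\in j(\{\beta<\alpha:A\cap\beta\in\mathcal{M}_\beta\})$, which by elementarity and $\mathrm{crit}(j)=\alpha$ reduces to $A\in\mathcal{M}^{M}_\alpha$, the intersection of the normal measures on $\alpha$ \emph{as computed in} $M$. The key set-theoretic fact is that ${}^{\alpha}M\subseteq M$, whence $P(\alpha)^{M}=P(\alpha)^{V}$ and being a normal measure on $\alpha$ is absolute between $M$ and $V$; consequently $M$ sees only a subfamily of the $V$-normal measures on $\alpha$, so $\mathcal{M}_\alpha\subseteq\mathcal{M}^{M}_\alpha$ and $A\in\mathcal{M}_\alpha$ gives $A\in\mathcal{M}^{M}_\alpha$. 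Since $\mathcal{U}$ was arbitrary, $\{\beta<\alpha:A\cap\beta\in\mathcal{M}_\beta\}\in\mathcal{M}_\alpha$, establishing $(\star)$ and with it the soundness of $\mathbf{GL}$.
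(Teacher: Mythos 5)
Your proposal is correct and follows essentially the same route as the paper: the paper attributes the result to Blass and identifies its key step as precisely your reflection property $(\star)$ (that $A\in\mathcal{M}_\alpha$ implies $\{\beta<\alpha: A\cap\beta\in\mathcal{M}_\beta\}\in\mathcal{M}_\alpha$), which you prove correctly via the ultrapower, the closure ${}^{\alpha}M\subseteq M$, and the resulting absoluteness of normal measures. Your remaining verification---filter properties for K, and L\"ob via transitivity plus the well-foundedness of $On$ (handling non-measurable $\alpha$ through the improper filter)---is the standard derivation that Blass's argument also uses, so there is nothing to flag.
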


From the soundness result, one can indicate some properties of measurable cardinals. For example, Blass showed that the validity of the L\"ob formula implies that any measurable cardinal $\kappa$ has a normal measure containing
$\{\alpha<\kappa: \alpha \text{ is not measurable}\}$.
Also, if  $A$ has a positive measure with respect to $\mathcal{M}_\kappa$, then so does the set
$\{\alpha\in A : \alpha \text{ has no normal filter containing } A\}$.
More generally, one can see that the validity of
$\lozenge^{n+1}\top \rightarrow \lozenge(\lozenge^m\top\wedge\Box^{m+1}\bot)$ for each $m<n$, implies that any measurable cardinal $\kappa$ with $o(\kappa)\geq n$ has  a normal measure  containing
$\{\alpha<\kappa : o(\alpha)=m\}$, where $o(\alpha)$ is the Mitchell order of $\alpha$ (see Definition \ref{morder4}).

The following lemma gives a sufficient condition to  convert a Kripke interpretation of a given formula into a filter interpretation (see the proof of Theorem 3 in \cite{blass} for information on how this conversion is defined).
So, the main part of the proof of Theorem \ref{maintheorem} is to show that
 the following lemma holds for a family of normal filter sequences; the proof is given in Section \ref{proofmaintheorem}.
Then  the completeness of $\mathbf{GL}$ with respect to these filters is obtained by Proposition \ref{Kripke complete}.

\begin{lemma}
\label{reduction lemma to trees}
(Blass \cite{blass}) Let $\vec{\mathcal{F}}=\langle   \mathcal{F}_\alpha: \alpha \in On        \rangle$ be a family of
filters $\mathcal{F}_\alpha$ on $\alpha.$  Suppose that
for each $n<\omega$ there exists a function $\Gamma: \mathbf{K}_n \to \mathcal{P}(On)$ satisfying the following conditions:
\begin{enumerate}
\item $\Gamma(\langle \rangle)$ in non-empty,

\item if $s \neq t$ are in $\mathbf{K}_n $, then $\Gamma(s) \cap \Gamma(t)$ is empty,

\item If $s \lhd t$ are in  $\mathbf{K}_n $ and $\alpha \in \Gamma(s)$, then $\Gamma(t) \cap \alpha$ has positive measure with respect to $\mathcal{F}_\alpha$,

\item If $s \in \mathbf{K}_n $ and $\alpha \in \Gamma(s),$ then $\bigcup_{s \lhd t}\Gamma(t) \cap \alpha \in \mathcal{F}_\alpha$.
\end{enumerate}
Then every $\vec{\mathcal{F}}$-valid modal formula is provable in $\mathbf{GL}$.
\end{lemma}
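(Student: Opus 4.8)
The plan is to argue by contraposition, converting a Kripke refutation of an unprovable formula into a filter refutation through the maps $\Gamma$. Suppose $\mathbf{GL}\nvdash\varphi$. By Proposition \ref{Kripke complete} there is a finite transitive irreflexive tree $(W,R)$ and a valuation refuting $\varphi$ at some node; passing to the subtree generated by that node (again a finite transitive irreflexive tree), I may assume $\varphi$ is refuted at the root $r$. Let $n$ be the height of this tree. Since every finite transitive tree of height $n$ is a bounded morphic image of $\mathbf{K}_n$ via a surjection $f$, and such a morphism necessarily sends $\langle\rangle$ to the root $r$ (otherwise $f(\langle\rangle)$ would have an $R$-predecessor, contradicting transitivity and irreflexivity), I pull the valuation back along $f$, setting $s\Vdash p$ iff $f(s)$ satisfies $p$. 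Because validity is preserved under bounded morphisms, the resulting Kripke model on $\mathbf{K}_n$ refutes $\varphi$ at $\langle\rangle$.

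Now fix the function $\Gamma:\mathbf{K}_n\to\mathcal{P}(On)$ supplied by the hypothesis for this $n$, and define a valuation $\nu$ on $\vec{\mathcal{F}}$ by
\[
\nu(p)=\bigcup\{\Gamma(s):s\in\mathbf{K}_n,\ s\Vdash p\}
\]
for each propositional variable $p$, extended to all formulas by the standard clauses. The heart of the argument is the truth lemma: for every formula $\psi$, every $s\in\mathbf{K}_n$ and every $\alpha\in\Gamma(s)$,
\[
\alpha\in\nu(\psi)\iff s\Vdash\psi.
\]
I would prove this by induction on $\psi$. For a propositional variable the equivalence is immediate from the definition of $\nu(p)$ together with the disjointness condition (2), which guarantees that $\alpha\in\Gamma(s)$ lies in no other $\Gamma(s')$. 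The Boolean cases are routine.

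The crucial and most delicate case is $\psi=\Box\chi$, where conditions (3) and (4) enter in complementary roles. If $s\Vdash\Box\chi$ then every $t$ with $s\lhd t$ forces $\chi$, so by the induction hypothesis $\bigcup_{s\lhd t}\Gamma(t)\subseteq\nu(\chi)$; condition (4) gives $\bigl(\bigcup_{s\lhd t}\Gamma(t)\bigr)\cap\alpha\in\mathcal{F}_\alpha$, and upward closure of the filter yields $\nu(\chi)\cap\alpha\in\mathcal{F}_\alpha$, i.e. $\alpha\in\nu(\Box\chi)$. Conversely, if $s\nVdash\Box\chi$, choose $t$ with $s\lhd t$ and $t\nVdash\chi$; by the induction hypothesis $\Gamma(t)$ is disjoint from $\nu(\chi)$, so $\nu(\chi)\cap\alpha\subseteq\alpha\setminus(\Gamma(t)\cap\alpha)$. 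Condition (3) says $\Gamma(t)\cap\alpha$ has positive measure, meaning $\alpha\setminus(\Gamma(t)\cap\alpha)\notin\mathcal{F}_\alpha$; were $\nu(\chi)\cap\alpha\in\mathcal{F}_\alpha$, upward closure would force $\alpha\setminus(\Gamma(t)\cap\alpha)\in\mathcal{F}_\alpha$, a contradiction, so $\alpha\notin\nu(\Box\chi)$. This closes the induction.

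Finally, by condition (1) I may pick $\alpha\in\Gamma(\langle\rangle)$; since $\langle\rangle\nVdash\varphi$, the truth lemma gives $\alpha\notin\nu(\varphi)$, whence $\nu(\varphi)\neq On$ and $\varphi$ is not $\vec{\mathcal{F}}$-valid. I expect the main obstacle to be exactly the $\Box$ case above: keeping straight the two asymmetric uses of the filter, namely upward closure for the box-true direction and the reformulation of ``positive measure'' as ``complement not in the filter'' for the box-false direction, and correctly reading the set $\bigl(\bigcup_{s\lhd t}\Gamma(t)\bigr)\cap\alpha$ against $\nu(\chi)\cap\alpha$.
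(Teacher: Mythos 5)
Your proof is correct and is essentially the argument the paper delegates to Blass (Theorem 3 of \cite{blass}): pull a Kripke counter-model for $\varphi$ back to $\mathbf{K}_n$, transfer it to the filter family via $\nu(p)=\bigcup\{\Gamma(s): s\Vdash p\}$, and prove the truth lemma by induction, using condition (2) for atoms, condition (4) plus upward closure of the filter for the box-true direction, and condition (3) read as ``the complement is not in the filter'' for the box-false direction, finishing with condition (1) at the root. The one blemish is your parenthetical justification that $f(\langle\rangle)$ must be the root $r$: the correct reason is that otherwise surjectivity gives some $s\neq\langle\rangle$ with $f(s)=r$, whence $f(\langle\rangle)\,R\,r$ by the forth condition, contradicting that the root of a finite irreflexive transitive tree has no $R$-predecessor (or one may simply use the standard morphism, which maps root to root by construction).
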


\subsection{Preliminaries from set theory}
In this subsection we recall some definitions and facts about measurable cardinals and  their Mitchell order structure.
\begin{definition}
An uncountable cardinal
$\kappa$ is a measurable cardinal if there exists a $\kappa$-complete non-principal ultrafilter on $\kappa.$
 \end{definition}
 One can show that any measurable cardinal
$\kappa$  carries a \emph{normal measure}, i.e., a $\kappa$-complete non-principal ultrafilter $\mathcal{U}$ on $\kappa$ which is normal:
\[
\forall \xi < \kappa, A_\xi \in \mathcal{U} \Rightarrow \bigtriangleup_{\xi<\kappa}A_\xi=\{\alpha < \kappa: \forall\xi<\alpha,~\alpha \in A_\xi    \}\in \mathcal{U}.
\]
Given a normal measure $\mathcal{U}$ on $\kappa$ we can perform the ultrapower $\text{Ult}(V, \mathcal{U})$ and the ultrapower embedding $j: V \to \text{Ult}(V, \mathcal{U})$ which is defined by $j(x) = [c_x]_{\mathcal{U}}$, where $c_x: \kappa \to V$ is the constant function $x$. By \L{}o\'{s} theorem, $j$ is easily seen to be an elementary embedding. On the other hand, $\text{Ult}(V, \mathcal{U})$
is well-founded, and hence it is isomorphic to a unique transitive inner model $M_{\mathcal{U}}$ via a unique
 isomorphism
 $\pi: \text{Ult}(V, \mathcal{U}) \simeq M_{\mathcal{U}}$. Then $j_{\mathcal{U}}: V \to M_{\mathcal{U}}$, defined by $j_{\mathcal{U}}=\pi \circ j$ defines an elementary embedding from the universe $V$ into an inner model $M_{\mathcal{U}}$ with critical point $\kappa$ (i.e., $j_{\mathcal{U}} \restriction \kappa=\text{id}\restriction \kappa$ and $j_{\mathcal{U}}(\kappa)> \kappa$).

 Conversely, given a non-trivial elementary embedding $j: V \to M$ from $V$ into an inner model $M$ with critical
 point $\kappa,$ one can form the normal measure
 \[
 \mathcal{U}=\{ X \subseteq \kappa: \kappa \in j(X)           \}
 \]
on $\kappa$ and then $j$ factors through $j_{\mathcal{U}}$ in the sense that  $j=k \circ j_{\mathcal{U}}$, where $k: M_{\mathcal U} \to M$
is defined as $k([f]_{\mathcal U})=j(f)(\kappa)$.
\begin{definition}
\begin{enumerate}
\item[(a)] Suppose $\lambda \geq \kappa$ are uncountable cardinals. Then $\kappa$ is  $\lambda$-strong if there exists a non-trivial
elementary embedding $j: V \to M$ from $V$ into some inner model $M$ with critical point $\kappa$ such that $^{\kappa}M \subseteq M$,
$V_\lambda \subseteq M$ and $j(\kappa)> \lambda$.

\item[(b)] A cardinal $\kappa$ is strong if it is $\lambda$-strong for all $\lambda \geq \kappa.$
\end{enumerate}
\end{definition}
We now define an order on normal measures introduced by Mitchell.
\begin{definition}\label{Mitchel def}
(Mitchell \cite{mitchell}) Suppose $\kappa$ is a measurable cardinal and $\mathcal{U}, \mathcal{W}$ are normal measures on it. Then $\mathcal{W} \lhd \mathcal{U}$ if and only if $\mathcal{W} \in \text{Ult}(V, \mathcal{U}).$
\end{definition}
In \cite{mitchell}, Mitchell proved that $\lhd$ is a well-founded order now known as the Mitchell ordering. Thus given
any normal measure $\mathcal{U}$ on $\kappa,$ we can define its Mitchell order as
\[
o(\mathcal{U})=\sup\{o(\mathcal{W})+1: \mathcal{W} \lhd \mathcal{U}                \}.
\]
The Mitchell order of $\kappa$ is also defined as
\[
o(\kappa)=\sup\{o(\mathcal{U})+1: \mathcal{U} \text{~is a normal measure on~}\kappa    \}.
\]
\begin{definition}
\label{morder4}
Suppose $\kappa$ is a measurable cardinal. Then
\[
\lhd(\kappa)= (\{ \mathcal{U}: \mathcal{U} \text{~is a normal measure on~}\kappa   \}, \lhd).
\]
\end{definition}
The  structure of $\lhd(\kappa)$ is widely studied in set theory, we refer to
 \cite{omer1} and \cite{omer2}  for most recent results.
 We will need the following,
 which plays an essential role in the proof of Theorem \ref{maintheorem}.
\begin{theorem}
\label{structure of mitchell orders-main}
(Ben-Neria \cite{omer2}) Let $V=L[E]$ be a core model.
  Suppose  there is a strong cardinal $\kappa$ and infinitely many measurable cardinals above it. Let $(\mathbf{S}, <)$ be a countable well-founded
order of rank at most $\omega.$
Then there exists a generic extension $V^*$ of $V$ in which $\lhd(\kappa)^{V^*} \simeq (\mathbf{S}, <)$.
\end{theorem}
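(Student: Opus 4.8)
The plan is to realize $(\mathbf{S},<)$ as $\lhd(\kappa)$ by a single forcing over $L[E]$ that manufactures exactly one normal measure $\mathcal{U}_s$ for each $s\in\mathbf{S}$ and arranges $\mathcal{U}_s\lhd\mathcal{U}_t$ precisely when $s<t$. In the core model the normal measures on $\kappa$ are linearly (indeed well-) ordered by $\lhd$, with $o(\kappa)$ reading off the extender sequence, so the whole difficulty is to \emph{break} this linearity and reshape it into the prescribed well-founded order while keeping tight control of which measures survive in the extension $V^*$.

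First I would fix the combinatorial data. Since $(\mathbf{S},<)$ is countable and well-founded of rank at most $\omega$, I would stratify it by rank, writing $\mathbf{S}=\bigcup_{n<\omega}\mathbf{S}_n$ with $\mathbf{S}_n$ the elements of rank $n$, and fix an enumeration. The design goal is that the measure $\mathcal{U}_s$ attached to $s$ has Mitchell order $o(\mathcal{U}_s)=\mathrm{rank}_{\mathbf{S}}(s)$, so that the global shape of $\lhd(\kappa)^{V^*}$ reproduces $(\mathbf{S},<)$. This is exactly where the hypotheses are calibrated: the strong cardinal $\kappa$ supplies, for every $\lambda$, a $(\kappa,\lambda)$-extender with $V_\lambda$ inside its target model, while the infinitely many measurable cardinals above $\kappa$ provide the ``height'' needed to feed Mitchell orders up to $\omega$ down onto $\kappa$, one reservoir per rank level.

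Second, the heart of the construction is the main forcing $\mathbb{P}$, whose job is twofold: it must create the family $\{\mathcal{U}_s:s\in\mathbf{S}\}$ and simultaneously destroy every normal measure on $\kappa$ that is not one of them. I would obtain the measures by a Prikry/Magidor-type iteration guided by the coding of $\mathbf{S}$, in which each step adds a normal measure and the generic object encodes, for each pair $s<t$, a witness placing $\mathcal{U}_s$ inside $\mathrm{Ult}(V^*,\mathcal{U}_t)$, while the same generic kills the measures we do not want. The \emph{positive} half of the verification --- that the intended measures exist and satisfy $\mathcal{U}_s\lhd\mathcal{U}_t$ iff $s<t$ --- should then follow by lifting the ground-model extender embeddings through $\mathbb{P}$ and computing where each $\mathcal{U}_s$ lands in the relevant ultrapower.

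The main obstacle is the \emph{exactness} of the construction: proving that in $V^*$ the only normal measures on $\kappa$ are the $\mathcal{U}_s$, with no Mitchell-order relations beyond $<$. Concretely, one must show that an arbitrary normal measure $\mathcal{W}$ on $\kappa$ in $V^*$ is necessarily captured by the generic. I expect this to require a genericity and Prikry-property analysis of $\mathbb{P}$, together with a covering argument in $L[E]$ that controls which ground-model measures can be resurrected in the extension. This is exactly where the core-model hypothesis $V=L[E]$ is indispensable, since it makes the possible measures and ultrapower embeddings canonical enough to rule out the unwanted ones. Once exactness is established, $\lhd(\kappa)^{V^*}\simeq(\mathbf{S},<)$ follows, completing the proof.
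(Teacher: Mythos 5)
The first thing to observe is that the paper does not prove this statement at all: it is Ben-Neria's theorem, quoted from \cite{omer2} and used as a black box. The only information the paper records about its proof is that the witnessing forcing is a Prikry type forcing notion $(\mathbb{Q},\leq,\leq^*)$ which can be arranged so that $(\mathbb{Q},\leq^*)$ is $\theta$-closed for any prescribed $\theta<\kappa$, and (in a footnote) that the hypothesis $V=L[E]$ is needed because structural properties of the core model enter the argument. So there is no paper proof to match your proposal against; the question is whether your outline stands on its own as a proof.

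It does not. Both halves of your plan are statements of what must be proved rather than arguments. For the positive half, ``a Prikry/Magidor-type iteration guided by the coding of $\mathbf{S}$, in which each step adds a normal measure and the generic object encodes, for each pair $s<t$, a witness placing $\mathcal{U}_s$ inside $\mathrm{Ult}(V^*,\mathcal{U}_t)$'' restates the desired conclusion instead of constructing anything: you never define the conditions, the direct extension order, or the lifting argument, and arranging that the relevant generic objects lie inside exactly the ultrapowers $\mathrm{Ult}(V^*,\mathcal{U}_t)$ with $s<t$, and in no others, is precisely the delicate point. (Note also that Prikry-type forcings in this setting are the natural tool for \emph{destroying} measures; the surviving measures must be produced by extending ground-model, extender-derived measures through the forcing, which is a separate lifting argument you do not supply.) For the negative half --- exactness --- you correctly identify it as the main obstacle and then state that you ``expect'' it to follow from a genericity and Prikry-property analysis plus a covering argument; but proving that every normal measure on $\kappa$ in $V^*$ is one of the $\mathcal{U}_s$, with no unintended $\lhd$-relations, is the core-model-theoretic heart of Ben-Neria's work (an analysis of how any ultrapower embedding of a Prikry-type extension of $L[E]$ is generated by the extender sequence together with the generic), and nothing in your sketch engages with it. Your background intuitions are sound --- in $L[E]$ the Mitchell order on $\kappa$ is indeed linear, and the task is to reshape it while controlling which measures survive --- but as a proof the proposal has a gap essentially coextensive with the theorem itself. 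Within the present paper the correct move is simply to cite \cite{omer2}; the paper's own contribution at this point, Theorem \ref{structure of mitchell orders}, is the separate step of iterating Ben-Neria's forcing along infinitely many strong cardinals via a Magidor iteration.
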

Let us recall that a forcing notion $(\mathbb{Q}, \leq, \leq^*)$ satisfies the Prikry property if $\leq^* \subseteq \leq$
and for each $p \in \mathbb{Q}$ and each sentence $\sigma$ in the forcing language of $(\mathbb{Q}, \leq)$, there exists $q \leq^* p$ such that $q$ decides $\sigma$.
The key point is that in general $(\mathbb{Q}, \leq)$ lacks to be even $\aleph_1$-distributive, but if $(\mathbb{Q}, \leq, \leq^*)$ satisfies the Prikry property and
$(\mathbb{Q}, \leq^*)$ is $\kappa$-closed, then forcing with $(\mathbb{Q}, \leq)$ does not add new bounded subsets to $\kappa.$
We may note that Ben-Neria's forcing of the above theorem   can be considered as a Prikry type forcing notion $(\mathbb{Q}, \leq, \leq^*)$, and furthermore, given any
 $\theta < \kappa$, we can manage the forcing so that $(\mathbb{Q}, \leq^*)$ is $\theta$-closed, in particular it does not add any new bounded subsets to $\theta$.

 The following is an immediate corollary of the above theorem, whose proof requires familiarity with
    Prikry type forcing notions and their iterations; see \cite{gitik} for more information.\footnote{The readers unfamiliar with forcing may skip the proof of this theorem.}

It is worth mentioning that the usual iteration of Prikry type forcing notions, say, for example, finite support iteration, fails to preserve cardinals. To give a motivation of why this happens, let $\langle \kappa_n: n<\omega \rangle$ be an increasing sequence of measurable cardinals and let $\kappa=\sup_{n<\omega}\kappa_n$. For each $n< \omega$, let $\mathcal U_n$ be a normal measure on $\kappa_n$ and let $\mathbb{P}_n$ be the corresponding Prikry forcing. Thus a condition in $\mathbb{P}_n$ is a pair $(s, A),$ where $s \in [\kappa_n]^{<\omega}, A \in \mathcal U_n$ and $\max(s) < \min(A)$. Given another condition $(t, B)$, we say $(t, B) \leq (s, A)$ iff $s \unlhd t, B \subseteq A$ and $t \setminus s \subseteq A.$ The direct extension order is defined as
 $(t, B) \leq^* (s, A)$ iff $(t, B) \leq (s, A)$ and $s=t$. Forcing with $\mathbb{P}_n$ adds an $\omega$-sequence $C_n \subseteq \kappa_n$,
  which is cofinal in $\kappa_n$, called the Prikry sequence. Now let $\mathbb{P}$ be the finite support product of $\mathbb{P}_n$'s.\footnote{The same argument applies to the case where $\mathbb{P}$ is a finite support iteration, but as it requires more details, we avoid it.} Thus a condition in
  $\mathbb{P}$  is a sequence $p= \langle  p_n: n<\omega       \rangle$, where each $p_n \in \mathbb{P}_n$, and for all but finitely many $n, p_n=(\emptyset, \kappa_n)=1_{\mathbb{P}_n}$.
 We claim that forcing with $\mathbb{P}$ collapses $\kappa$ into $\omega$. To see this, let $G$ be $\mathbb{P}$-generic over $V$, and for each $n$, let $C_n$ be the Prikry sequence added by it thorough $\kappa_n.$ Define $f: \omega \rightarrow \kappa$ by $f(n)=\min(C_n)$.
 Given any $\alpha < \kappa,$ it is easy to see that the set
 $$D_\alpha=\{p= \langle p_n: n<\omega\rangle \in \mathbb{P}: \exists n \big( \kappa_n > \alpha,~  p_n=(\{\alpha\}, \kappa_n \setminus (\alpha+1)) \big)   \}$$
  is dense in $\mathbb{P}$. But if $p \in D_\alpha$ is as above,  then $p \Vdash_{\mathbb{P}}$``$\lusim{f}(n)=\min(\lusim{C}_n)=\alpha$''
It follows that $\text{range}(f)=\kappa$, and thus $\kappa$ is collapsed.

However, there are other ways to iterate Prikry type forcing notions which avoid the above problem; the most known methods are the Magidor iteration (full support iteration) and Gitik iteration (Easton support iteration). Such kind of iterations
satisfy the Prikry property, and by using this fact, one can show that the resulting forcing behaves nicely.

\begin{theorem}
\label{structure of mitchell orders}
 Let $V=L[E]$ be a core model \footnote{In the proof of  Theorem \ref{structure of mitchell orders-main}, it is required  the ground model to be a canonical core model
of the form $V=L[E]$. This is needed,  as structural properties of the core model
are used for several arguments. Since
our proof of Theorem \ref{structure of mitchell orders} is based on Theorem \ref{structure of mitchell orders-main}, so we also need to start with a canonical core model.}. Suppose  there is an $\omega$-sequence $\langle \kappa_n: n<\omega \rangle$ of strong cardinals and suppose
$\langle (\mathbf{S}_n, <_n): n<\omega \rangle$ is a  sequence of countable well-founded
orders, each of rank at most $\omega.$
Then there exists a generic extension $V^*$ of $V$ in which for each $n<\omega$, $\lhd(\kappa_n)^{V^*} \simeq (\mathbf{S}_n, <_n)$.
\end{theorem}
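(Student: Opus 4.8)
The plan is to realise the prescribed Mitchell-order structures one strong cardinal at a time, by iterating the forcing of \cref{structure of mitchell orders-main} along the sequence $\langle \kappa_n : n<\omega\rangle$ and then arguing that the individual factors do not interfere with one another. Since each $\kappa_{n+1}$ is strong, it is a limit of measurables, so the open interval $(\kappa_n,\kappa_{n+1})$ contains infinitely many measurable cardinals; thus $\kappa_n$ is a strong cardinal with infinitely many measurables above it all lying below $\kappa_{n+1}$, which is exactly the hypothesis of \cref{structure of mitchell orders-main}. For each $n$ I would apply (the construction underlying) that theorem to obtain a Prikry type forcing $(\mathbb{Q}_n,\le_n,\le^*_n)$ whose effect is local to the interval $[\kappa_n,\kappa_{n+1})$ (in particular $|\mathbb{Q}_n|<\kappa_{n+1}$) and which forces $\lhd(\kappa_n)\simeq(\mathbf{S}_n,<_n)$. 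Using the closure remark following \cref{structure of mitchell orders-main}, I would arrange that $(\mathbb{Q}_n,\le^*_n)$ is $\theta_n$-closed for some $\theta_n$ with $2^{\kappa_{n-1}}<\theta_n<\kappa_n$ (such $\theta_n$ exists since $\kappa_n$ is inaccessible and $\mathsf{GCH}$-like arithmetic holds in $L[E]$); this closure is the key to localising the construction.

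Next I would assemble the factors into a single Magidor-style full support iteration $(\mathbb{Q},\le,\le^*)=\langle \mathbb{Q}_n:n<\omega\rangle$ performed over the core model $V=L[E]$. The point of using such an iteration, rather than the finite support product discussed above (which collapses $\kappa=\sup_n\kappa_n$), is that the increasing closure of the direct extension orders $\le^*_n$ makes $(\mathbb{Q},\le,\le^*)$ satisfy the Prikry property while preserving all cardinals; in particular $\kappa$ is not collapsed. Fix a generic $G$ and put $V^*=V[G]$.

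To read off $\lhd(\kappa_n)^{V^*}$ I would factor $\mathbb{Q}=\mathbb{Q}_{<n+1}*\dot{\mathbb{Q}}_{\ge n+1}$ into the head, consisting of the first $n+1$ factors, and the tail. The tail has a direct extension order that is $\theta_{n+1}$-closed with $\theta_{n+1}>2^{\kappa_n}$, and, being a tail of the iteration, still satisfies the Prikry property; hence forcing with it adds no new bounded subset of $\theta_{n+1}$, and therefore no new subset of $\kappa_n$ and (after coding) no new subset of $\mathcal{P}(\kappa_n)$. Consequently the tail neither creates nor destroys normal measures on $\kappa_n$ and preserves the Mitchell ordering among them, so $\lhd(\kappa_n)^{V^*}=\lhd(\kappa_n)^{V[G_{<n+1}]}$. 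On the head side, the forcing below $\kappa_n$ has size $<\kappa_n$, so it is small relative to $\kappa_n$; by L\'evy--Solovay the strongness of $\kappa_n$ together with the extenders and measures used by the construction survive canonically, and the last factor $\mathbb{Q}_n$ then yields $\lhd(\kappa_n)^{V[G_{<n+1}]}\simeq(\mathbf{S}_n,<_n)$ as in \cref{structure of mitchell orders-main}. Combining the two equalities gives $\lhd(\kappa_n)^{V^*}\simeq(\mathbf{S}_n,<_n)$ for every $n$.

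The main obstacle is precisely the iteration bookkeeping. Establishing that the Magidor iteration of the Prikry type forcings $\mathbb{Q}_n$ satisfies the Prikry property and preserves all cardinals (especially $\kappa=\sup_n\kappa_n$) is the technical core, and it is what the finite support counterexample in the discussion above is meant to warn against. A second delicate point is that \cref{structure of mitchell orders-main} is stated over a core model $V=L[E]$, whereas in the iteration the factor $\mathbb{Q}_n$ is applied over $V[G_{<n}]$, which is no longer a core model; this is why the head forcing below $\kappa_n$ must be kept small relative to $\kappa_n$, so that the relevant core model structure at $\kappa_n$ is preserved and the theorem can still be invoked locally. Once these two points are secured, the equalities above deliver the result.
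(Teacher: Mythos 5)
Your proposal is correct and follows essentially the same route as the paper: a Magidor (full support) iteration of Ben-Neria's Prikry-type forcings, with each factor of size $<\kappa_{n+1}$ so that the later $\kappa_m$ remain strong (L\'evy--Solovay), and with the direct extension orders sufficiently closed so that the tail of the iteration satisfies the Prikry property, adds no new bounded subsets below the next stage, and hence cannot create or destroy normal measures on the earlier $\kappa_n$ or alter their Mitchell order. The only cosmetic difference is the closure parameter: you anchor it at a $\theta_{n+1}>2^{\kappa_n}$, while the paper anchors it at $\lambda^{n}_\omega$, the $(\omega+1)$-st measurable above $\kappa_n$ (above the measurables consumed by Ben-Neria's construction at stage $n$); both serve the identical protective purpose, and both proofs lean on Ben-Neria's theorem ``and its proof'' in the same way to run the construction over the non-core-model intermediate extensions.
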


\begin{proof}
Recall from the above  that Ben-Neria's forcing of Theorem \ref{structure of mitchell orders-main}  is a Prikry type forcing notion $(\mathbb{Q}, \leq, \leq^*)$.
Now let
 $\langle \kappa_n: n<\omega \rangle$ and
$\langle (\mathbf{S}_n, <_n): n<\omega \rangle$ be as above. For each $n<\omega$ let $\langle \lambda^n_i: i \leq \omega \rangle$ be the first
$\omega+1$ measurable cardinals above $\kappa_n$.\footnote{The need for the use of measurable cardinals $\lambda^n_i$, for $i<\omega$, comes from Ben-Neria's theorem \ref{structure of mitchell orders-main}, we choose $\lambda^n_\omega$ large enough above $\lambda^n_i$'s, $i<\omega$, to get enough closure property (with respect to the direct extension order).} Then  for each $n$, $\lambda^n_{\omega} < \kappa_{n+1}$. Let
\[
\mathbb{P}=\langle   \langle (\mathbb{P}_n, \leq_{\mathbb{P}_n}, \leq^*_{\mathbb{P}_n}): n \leq \omega \rangle, \langle (\lusim{\mathbb{Q}_n}, \leq_{\lusim{\mathbb{Q}_n}}, \leq^*_{\lusim{\mathbb{Q}_n}}): n<\omega        \rangle\rangle
\]
be the Magidor iteration of Prikry type forcing notions, where for each $n<\omega$, $(\mathbb{Q}_n, \leq_{\lusim{\mathbb{Q}_n}}, \leq^*_{\lusim{\mathbb{Q}_n}})$ is defined in $V^{\mathbb{P}_n}$ such that:
\begin{enumerate}
\item $|\mathbb{Q}_n| < \kappa_{n+1}$,

\item $(\mathbb{Q}_n, \leq^*_{\mathbb{Q}_n})$ is $\lambda^{n-1}_\omega$-closed, in particular it adds no new bounded subsets to $\lambda^{n-1}_\omega$,

\item  $\lhd(\kappa_n)^{V^{\mathbb{P}_{n+1}}} \simeq (\mathbf{S}_n, <_n)$,
\end{enumerate}
This is possible by  Theorem \ref{structure of mitchell orders-main} (and its proof) and the fact that by (1), all cardinals $\kappa_m,$ for $m>n$ remain strong in the extension by $\mathbb{P}_{n+1}$.
Then $V^*=V^{\mathbb{P}_\omega}$ is the required model.
\end{proof}

\section{Completeness of $\mathbf{GL}$ with respect to the normal filter sequence}
\label{proofmaintheorem}
In this section we prove Theorem \ref{maintheorem}.
Let $L[E]$ be the canonical extender model and suppose in it there is an $\omega$ sequence $\langle  \kappa_n: 0<n<\omega     \rangle$ of strong cardinals. By  Theorem \ref{structure of mitchell orders}, we can extend $L[E]$ to a generic extension $V$ in which
 the structure of the Mitchell order of $\kappa_n$, $\lhd(\kappa_n)$, is isomorphic to $\mathbf{S}_n,$ where
$\mathbf{S}_n= \mathbf{K}_n \setminus \{ \langle \rangle  \}$, ordered by $t < s$ iff  $t$ end extends $s$.

We show that in $V$, the provability logic $\mathbf{GL}$ is complete with respect to the normal filter sequence.  Set  $\kappa=\sup_{n<\omega}\kappa_n$.
By Lemma \ref{reduction lemma to trees}, it suffices to  show that for each $n<\omega$ there exists a function $\Gamma: \mathbf{K}_n \to \mathcal{P}(\kappa)$ satisfying the following conditions:
\begin{enumerate}
\item[$(\dagger)_1$] $\Gamma(\langle \rangle)$ is non-empty,

\item[$(\dagger)_2$] if $s \neq t$ are in $\mathbf{K}_n$, then $\Gamma(s) \cap \Gamma(t)$ is empty,

\item[$(\dagger)_3$] If $s \lhd t$ are in  $\mathbf{K}_n$ and $\eta \in \Gamma(s)$, then $\Gamma(t) \cap \eta$ has positive measure with respect to $\mathcal{M}_\eta$,
i.e., $\Gamma(t) \cap \eta$ belongs to at least one normal measure on $\eta$,

\item[$(\dagger)_4$] If $s \in \mathbf{K}_n$ is not maximal and $\eta \in \Gamma(s),$ then $\bigcup_{s \lhd t}\Gamma(t) \cap \eta \in \mathcal{M}_\eta$.
\end{enumerate}

Let us first  suppose that $n=1$. Let $\mathbf{S}=\mathbf{S}_1$ and $\eta=\kappa_1$.
Then $\mathbf{S}=\{\langle(0, \ell)\rangle: \ell<\omega     \}$, and in $V$,  $\eta$
has exactly $\omega$-many normal  measures $\mathcal{U}(0, \ell), \ell<\omega,$ all of Mitchell order $0$.
Pick  sets $A_{0, \ell} \in \mathcal{U}(0, \ell),$ so that for all $\ell \neq \ell', A_{0, \ell} \cap A_{0, \ell'}=\emptyset.$

Define $\Gamma: \mathbf{K}_1 \to \mathcal{P}(\kappa)$ by
\begin{center}
 $\Gamma(s)=$ $\left\{
\begin{array}{l}
         \{ \eta \}  \hspace{1.6cm} \text{ if } s= \langle \rangle,\\
         A_{0, \ell}  \hspace{1.5cm} \text{ if } s=\langle (0,\ell)  \rangle.
     \end{array} \right.$
\end{center}
It is clear that $\Gamma$ is as required.

Now suppose that $n \geq 2$. Let  $\mathbf{S}=\mathbf{S}_n$ and $\eta=\kappa_n$.
Thus in $V$, $\lhd(\eta)\simeq \mathbf{S}.$ Let
\[
\lhd(\eta) = \{\mathcal{U}(s): s \in \mathbf{S}   \},
\]
where for each $s, t \in \mathbf{S}$
\[
t < s \iff \mathcal{U}(t) \lhd \mathcal{U}(s).\footnote{Recall that $t < s$ iff $s \lhd t$.}
\]
For each $s \in \mathbf{S}$ let $j_s: V \to M_s \simeq \text{Ult}(V, \mathcal{U}(s))$ be the canonical ultrapower embedding. Note that for each
$X \subseteq \eta,$ we have
\[
X \in \mathcal{U}(s) \iff \eta \in j_s(X).
\]
Pick the  sets $A_{s}$ for $s \in \mathbf{S}$
such that:
\begin{enumerate}
\item[$(\beth)_1$] $A_{s} \in \mathcal{U}(s),$

\item[$(\beth)_2$] for all $s \neq t$ in $\mathbf{S}, A_s \cap A_t=\emptyset.$
\end{enumerate}
For $t < s$ in $\mathbf{S}$, let $g_t^s: \eta \to V$ represent $\mathcal{U}(t)$
in the ultrapower by $\mathcal{U}(s)$, i.e.,
$\mathcal{U}(t)=[g_t^s]_{\mathcal{U}(s)}$.
The next lemma is proved in \cite{mitchell}.
\begin{lemma}
\label{representationn1}
Suppose $t< s$ are in $\mathbf{S}$ and $X \subseteq \eta.$ Then
\[
X \in \mathcal{U}(t) \iff \{\nu \in A_s: X \cap \nu \in g_t^s(\nu)              \} \in \mathcal{U}(s).
\]
\end{lemma}
\begin{proof}
We give a proof for completeness. Let $X \subseteq \eta$ and set $Y= \{\nu \in A_s: X \cap \nu \in g_t^s(\nu)              \}.$ Then
$j_s(X) \cap \eta=X$ and $j_s(g_t^s)(\eta)=\mathcal{U}(t)$,  hence
\[
Y \in \mathcal{U}(s) \iff \eta \in j_s(Y) \iff j_s(X) \cap \eta \in j_s(g_t^s)(\eta) \iff X \in \mathcal{U}(t),
\]
which gives the result.
\end{proof}
The proof of the next lemma follows the ideas of \cite{magidor}.
\begin{lemma}
\label{representationn2}
Suppose $u< t< s$ are in $\mathbf{S}$. Then
\[
A^1_{s, t, u}=\{  \nu \in A_s: g_u^s(\nu) \lhd g_t^s(\nu) \text{~are normal measures on~}\nu\} \in \mathcal{U}(s).
\]
\end{lemma}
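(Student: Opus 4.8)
The plan is to apply \L{}o\'{s}'s theorem to the ultrapower by $\mathcal{U}(s)$, reducing the membership $A^1_{s,t,u}\in\mathcal{U}(s)$ to a single statement inside the inner model $M_s=\text{Ult}(V,\mathcal{U}(s))$, and then to verify that statement using the closure of $M_s$ under $\eta$-sequences. Since $A_s\in\mathcal{U}(s)$ by $(\beth)_1$, it is enough to show that $\{\nu:g_u^s(\nu)\lhd g_t^s(\nu)\text{ are normal measures on }\nu\}\in\mathcal{U}(s)$. The displayed condition is a definable property of the triple $(\nu,g_u^s(\nu),g_t^s(\nu))$, so \L{}o\'{s}'s theorem applies. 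Recalling that $[\mathrm{id}]_{\mathcal{U}(s)}=\eta$ by normality of $\mathcal{U}(s)$, and that $[g_t^s]_{\mathcal{U}(s)}=\mathcal{U}(t)$ and $[g_u^s]_{\mathcal{U}(s)}=\mathcal{U}(u)$, the task becomes the single requirement
\[
M_s\models\text{``}\mathcal{U}(u)\lhd\mathcal{U}(t)\text{ are normal measures on }\eta\text{''}.
\]
Thus I must establish three facts inside $M_s$: that $\mathcal{U}(t)$ and $\mathcal{U}(u)$ are normal measures on $\eta$, and that $\mathcal{U}(u)\lhd\mathcal{U}(t)$.

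The normality claims follow from closure. Because $\mathcal{U}(s)$ is a normal measure on $\eta$, the model $M_s$ is closed under $\eta$-sequences, so $V_\eta\subseteq M_s$ and $P(\eta)^{M_s}=P(\eta)^V$; hence $M_s$ also computes ${}^{\eta}P(\eta)$ correctly, and the property of being a normal measure on $\eta$ is absolute between $V$ and $M_s$. Since $<$ is transitive on $\mathbf{S}$, from $u<t<s$ we get $u<s$ and $t<s$, so $\mathcal{U}(u),\mathcal{U}(t)\lhd\mathcal{U}(s)$ and both lie in $M_s$ (indeed $\mathcal{U}(u)=[g_u^s]_{\mathcal{U}(s)}$ and $\mathcal{U}(t)=[g_t^s]_{\mathcal{U}(s)}$ already exhibit them as members of $M_s$). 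Being normal measures on $\eta$ in $V$, they remain so in $M_s$.

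For the Mitchell-order claim I would transfer the witnessing representing function. From $u<t$ we have $\mathcal{U}(u)\lhd\mathcal{U}(t)$ in $V$, witnessed by $g_u^t$ with $[g_u^t]_{\mathcal{U}(t)}=\mathcal{U}(u)$. Arranging $g_u^t(\nu)\subseteq P(\nu)$ so that $g_u^t$ maps $\eta$ into $V_{\eta+1}$, and using $V_{\eta+1}^{M_s}=V_{\eta+1}^{V}$ together with $\eta$-closure, one sees $g_u^t\in M_s$. The factor embedding $k\colon\text{Ult}(M_s,\mathcal{U}(t))\to\text{Ult}(V,\mathcal{U}(t))$, $k([f]^{M_s})=[f]^{V}$, has critical point exceeding $\eta$ and so fixes every subset of $\eta$; in particular it fixes $\mathcal{U}(u)\subseteq P(\eta)$. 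Since $k$ sends $[g_u^t]^{M_s}_{\mathcal{U}(t)}$ to $[g_u^t]^{V}_{\mathcal{U}(t)}=\mathcal{U}(u)$, I conclude $[g_u^t]^{M_s}_{\mathcal{U}(t)}=\mathcal{U}(u)$, whence $\mathcal{U}(u)\in\text{Ult}(M_s,\mathcal{U}(t))$, that is, $M_s\models\mathcal{U}(u)\lhd\mathcal{U}(t)$. Combining the three facts with the \L{}o\'{s} reduction gives $A^1_{s,t,u}\in\mathcal{U}(s)$.

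The main obstacle is precisely this last step: the absoluteness of the Mitchell order between $V$ and the closed inner model $M_s$. One must check that the ultrapower of $M_s$ by $\mathcal{U}(t)$ genuinely contains $\mathcal{U}(u)$, and this is exactly where the agreement of $V$ and $M_s$ on $P(\eta)$ and the triviality of the factor map on subsets of $\eta$ are essential. Everything else is a routine invocation of \L{}o\'{s}'s theorem and the standard closure properties of ultrapowers by normal measures.
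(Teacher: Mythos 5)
Your reduction is exactly the paper's: by \L{}o\'{s}'s theorem and normality ($[\mathrm{id}]_{\mathcal{U}(s)}=\eta$, $[g_u^s]_{\mathcal{U}(s)}=\mathcal{U}(u)$, $[g_t^s]_{\mathcal{U}(s)}=\mathcal{U}(t)$), membership of $A^1_{s,t,u}$ in $\mathcal{U}(s)$ is equivalent to $M_s\models$ ``$\mathcal{U}(u)\lhd\mathcal{U}(t)$ are normal measures on $\eta$''. The paper's proof stops at that equivalence and simply asserts the conclusion, so the absoluteness analysis you supply afterwards (normality of $\mathcal{U}(t)$ and $\mathcal{U}(u)$ inside $M_s$ via $P(\eta)^{M_s}=P(\eta)^V$, and $g_u^t\in M_s$ via $\eta$-closure) is precisely the content the paper leaves implicit; those steps of yours are correct.

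The flaw is in your final step. From ``$k$ fixes every subset of $\eta$'' you infer ``in particular it fixes $\mathcal{U}(u)\subseteq P(\eta)$''. This is a level confusion: $\mathcal{U}(u)$ is a subset of $P(\eta)$, not of $\eta$, and an $\in$-preserving map that fixes every \emph{member} of a set need not fix the set itself (one must also know that every member of the image lies in the range of $k$). Worse, writing $k(\mathcal{U}(u))=\mathcal{U}(u)$ presupposes $\mathcal{U}(u)\in\text{Ult}(M_s,\mathcal{U}(t))$, which is exactly the Mitchell-order statement you are trying to prove, so as phrased the step is circular. The repair is immediate from facts you already have. For one inclusion: every member of $[g_u^t]^{M_s}_{\mathcal{U}(t)}$ is, by \L{}o\'{s} in $M_s$, a subset of $\eta$; it is fixed by $k$ and mapped into $k\big([g_u^t]^{M_s}_{\mathcal{U}(t)}\big)=\mathcal{U}(u)$, so $[g_u^t]^{M_s}_{\mathcal{U}(t)}\subseteq\mathcal{U}(u)$. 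For the other: any $X\in\mathcal{U}(u)$ lies in $P(\eta)=P(\eta)^{\text{Ult}(M_s,\mathcal{U}(t))}$, hence is in the domain of $k$ with $k(X)=X\in k\big([g_u^t]^{M_s}_{\mathcal{U}(t)}\big)$; since $k$ also preserves non-membership, $X\in[g_u^t]^{M_s}_{\mathcal{U}(t)}$. Even simpler, drop $k$ entirely: for $X\subseteq\eta$, normality of $\mathcal{U}(t)$ in both models gives
\[
X\in[g_u^t]^{M_s}_{\mathcal{U}(t)}\iff\{\nu<\eta: X\cap\nu\in g_u^t(\nu)\}\in\mathcal{U}(t)\iff X\in[g_u^t]^{V}_{\mathcal{U}(t)}=\mathcal{U}(u),
\]
because the middle set is computed identically in $M_s$ and $V$; this is just the argument of Lemma~\ref{representationn1} relativized to $M_s$. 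A smaller point of the same kind: your $k$ is not literally elementary (\L{}o\'{s} for $\text{Ult}(M_s,\cdot)$ refers to satisfaction in $M_s$, not $V$); it is $\Delta_0$-elementary between transitive classes, so ``critical point'' talk should be read in that light --- but $\Delta_0$-elementarity is all the repaired argument needs.
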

\begin{proof}
As $j_s(g_u^s)(\eta)=\mathcal{U}(s)$ and  $j_s(g_t^s)(\eta)=\mathcal{U}(t)$, we have
\[
A^1_{s, t, u} \in \mathcal{U}(s) \iff \eta \in j_s(Y) \iff M_s \models \mathcal{U}(s) \lhd \mathcal{U}(t) \text{~are normal measures on~} \eta,
\]
which gives the required result.
\end{proof}
Suppose $u< t< s$ are in $\mathbf{S}$ and $g_u^s(\nu) \lhd g_t^s(\nu).$ Then $g_u^s(\nu)$ has a representative function which presents it
 in the ultrapower by $g_t^s(\nu)$. The next lemma shows that there is already a canonical such representation.
\begin{lemma}
\label{representationn3}
Suppose $u< t< s$ are in $\mathbf{S}$. Then
\[
A^2_{s, t, u}=\{  \nu \in A_s: g_u^s(\nu) =[g_u^t \restriction \nu]_{g_t^s(\nu)} \} \in \mathcal{U}(s).
\]
\end{lemma}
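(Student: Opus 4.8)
The plan is to reproduce the template used in the proofs of Lemmas \ref{representationn1} and \ref{representationn2}: push the defining condition of $A^2_{s,t,u}$ through $j_s$ and read it off at the seed $\eta=[\text{id}]_{\mathcal{U}(s)}$. Since $X\in\mathcal{U}(s)\iff\eta\in j_s(X)$ and the operation $(f,\mathcal{W})\mapsto[f]_{\mathcal{W}}$ is definable, elementarity gives that $A^2_{s,t,u}\in\mathcal{U}(s)$ is equivalent to $\eta\in j_s(A_s)$ together with
\[
j_s(g_u^s)(\eta) = [\,j_s(g_u^t) \restriction \eta\,]_{\,j_s(g_t^s)(\eta)}^{M_s}.
\]
The first conjunct holds since $A_s\in\mathcal{U}(s)$. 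For the second, the fundamental identity $j_s(f)(\eta)=[f]_{\mathcal{U}(s)}$ and the choice of the representing functions give $j_s(g_u^s)(\eta)=\mathcal{U}(u)$ and $j_s(g_t^s)(\eta)=\mathcal{U}(t)$, exactly as in Lemma \ref{representationn2}. Thus the whole lemma reduces to the single identity
\[
\mathcal{U}(u) = [\,j_s(g_u^t) \restriction \eta\,]_{\mathcal{U}(t)}^{M_s}.
\]

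Next I would simplify the function appearing on the right. Because $\mathrm{crit}(j_s)=\eta$ we have $j_s\restriction V_\eta=\mathrm{id}$, while for $\mathcal{U}(t)$-almost every $\nu<\eta$ the value $g_u^t(\nu)$ is a normal measure on $\nu$, hence an element of $V_\eta$. Consequently, for such $\nu$,
\[
(j_s(g_u^t) \restriction \eta)(\nu) = j_s(g_u^t)(\nu) = j_s\big(g_u^t(\nu)\big) = g_u^t(\nu),
\]
so $j_s(g_u^t)\restriction\eta$ and $g_u^t$ agree $\mathcal{U}(t)$-almost everywhere and therefore represent the same element of $\text{Ult}(M_s,\mathcal{U}(t))$. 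Hence the displayed identity is equivalent to $[g_u^t]_{\mathcal{U}(t)}^{M_s}=\mathcal{U}(u)$.

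The crux is then to pass from the equality $[g_u^t]_{\mathcal{U}(t)}^{V}=\mathcal{U}(u)$, which holds by the very choice of $g_u^t$, to the same equality computed inside $M_s$. Here I would invoke that $M_s=\text{Ult}(V,\mathcal{U}(s))$ is closed under $\eta$-sequences, $^{\eta}M_s\subseteq M_s$. This yields $\mathcal{P}(\eta)^{M_s}=\mathcal{P}(\eta)^{V}$ and, crucially, that every function $\eta\to V_\eta$ already lies in $M_s$. Since any representative $h$ of a member of $[g_u^t]_{\mathcal{U}(t)}$ satisfies $h(\nu)\in g_u^t(\nu)\subseteq\mathcal{P}(\nu)\subseteq V_\eta$ for almost all $\nu$, such $h$ is available in both $V$ and $M_s$; a routine $\in$-induction on rank then shows that the transitive collapses of $[g_u^t]_{\mathcal{U}(t)}^{V}$ and $[g_u^t]_{\mathcal{U}(t)}^{M_s}$ coincide. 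Therefore $[g_u^t]_{\mathcal{U}(t)}^{M_s}=\mathcal{U}(u)$, which is what remained to be shown.

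I expect the main obstacle to be exactly this final step: verifying that the two ultrapowers by $\mathcal{U}(t)$, one taken in $V$ and one in the inner model $M_s$, assign the same value to $g_u^t$. The rest is bookkeeping with the seed $\eta$ and elementarity, but the agreement of the collapses genuinely needs the $\eta$-closure of $M_s$ together with the fact that $g_u^t$ is almost everywhere $V_\eta$-valued; without the closure one could not guarantee that $M_s$ sees enough functions below $g_u^t$ to reconstruct $\mathcal{U}(u)$ correctly.
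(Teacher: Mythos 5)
Your proof is correct and takes essentially the same route as the paper: the \L{}o\'{s}/seed computation reduces the lemma to $M_s \models \mathcal{U}(u)=[j_s(g_u^t)\restriction \eta]_{\mathcal{U}(t)}$, and then one uses that $j_s(g_u^t)\restriction \eta$ agrees with $g_u^t$ ($\mathcal{U}(t)$-almost everywhere). The only difference is that you explicitly justify the final identification $[g_u^t]_{\mathcal{U}(t)}^{M_s}=[g_u^t]_{\mathcal{U}(t)}^{V}=\mathcal{U}(u)$ via the $\eta$-closure of $M_s$ and agreement of $\mathcal{P}(\eta)$, a point the paper's proof subsumes in ``from which the result follows.''
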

\begin{proof}
As in the proof of Lemma \ref{representationn2}, we have
\[
A^2_{s, t, u} \in \mathcal{U}(s) \iff M_s \models \mathcal{U}(u)= [j_s(g_u^t)\restriction \eta]_{\mathcal{U}(t)}.
\]
On the other hand $j_s(g_u^t)\restriction \eta=g_u^t$ and hence
 $[j_s(g_u^t)\restriction \eta]_{\mathcal{U}(t)}=[g_u^t]_{\mathcal{U}(t)}$, from which the result follows.
\end{proof}
For $u<t<s$ in $\mathbf{S}$, let $A_{s,t, u}=A^1_{s, t, u} \cap A^2_{s, t, u}$.
The next lemma is an immediate corollary of the above two lemmas.
\begin{lemma}
\label{representationn4}
Suppose $s \in \mathbf{S}$. Then
\[
B_s= \bigcap_{u<t<s}A_{s, t, u} \in \mathcal{U}(s).
\]
\end{lemma}

For each $s \in \mathbf{S},$
set
\[
\mathbf{S}/(<s) = \{ t \in \mathbf{S}: t< s            \}.
\]
\begin{lemma}
\label{representationn5}
\begin{enumerate}
\item[(a)] Suppose $s \in \mathbf{S}$ is a minimal node. Then
\[
C_{s}=\{\nu \in B_s: \nu \text{~is an inaccessible non-measurable cardinal~}   \} \in \mathcal{U}(s).
\]

\item[(b)] Suppose $s \in \mathbf{S}$ is not minimal. Then
\[
C_s=\{\nu \in B_s: \lhd(\nu) \simeq \mathbf{S}/(<s)   \} \in \mathcal{U}(s).
\]
furthermore, for each $\nu \in C_s,$
\[
\lhd(\nu) = \{ g_t^s(\nu): t < s                        \}.
\]
\end{enumerate}
\end{lemma}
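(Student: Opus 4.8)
The plan is to prove both parts by reflecting a suitable statement about $\eta$ from the ultrapower $M_s=\text{Ult}(V,\mathcal{U}(s))$ back to $V$, using the basic equivalence $Z\in\mathcal{U}(s)\iff \eta\in j_s(Z)$. Two features of the setup drive everything: first, $M_s$ is highly closed, ${}^{\eta}M_s\subseteq M_s$, so that $\mathcal{P}(\eta)^{M_s}=\mathcal{P}(\eta)^V$ and more generally $V_{\eta+1}^{M_s}=V_{\eta+1}^V$; second, by the definition of the Mitchell order the normal measures on $\eta$ lying in $M_s$ are exactly the $\mathcal{W}\lhd\mathcal{U}(s)$, i.e. exactly $\{\mathcal{U}(t):t<s\}$.

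For part (a), minimality of $s$ means there is no $t<s$, hence no normal measure $\mathcal{W}\lhd\mathcal{U}(s)$; equivalently $\eta$ carries no normal measure in $M_s$, so $M_s\models$ ``$\eta$ is not measurable''. Moreover $\eta=\text{crit}(j_s)$ is regular in $M_s$ (being regular in $V\supseteq M_s$) and a strong limit there (as $V_\eta^{M_s}=V_\eta^V$), so $M_s\models$ ``$\eta$ is inaccessible and non-measurable''. Reflecting this through $\mathcal{U}(s)$ and intersecting with $B_s\in\mathcal{U}(s)$ (Lemma \ref{representationn4}) yields $C_s\in\mathcal{U}(s)$.

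For part (b) I would first identify, inside $M_s$, the normal measures on $\eta$ together with their Mitchell order. By the closure of $M_s$ the predicate ``normal measure on $\eta$'' is absolute, and the enumeration $\lhd(\eta)^V=\{\mathcal{U}(r):r\in\mathbf{S}\}$ is complete, so the normal measures on $\eta$ in $M_s$ are precisely $\{\mathcal{U}(t):t<s\}$; since $[g_t^s]_{\mathcal{U}(s)}=j_s(g_t^s)(\eta)=\mathcal{U}(t)$, these are represented by the functions $g_t^s$, which is what produces the ``furthermore'' clause after reflection. Granting that the Mitchell order on these measures computed in $M_s$ agrees with that of $V$, the map $t\mapsto\mathcal{U}(t)$ is an isomorphism of $(\mathbf{S}/(<s),<)$ onto $(\lhd(\eta),\lhd)^{M_s}$; and since $\mathbf{S}/(<s)$ is a countable set of hereditarily finite objects of size below $\text{crit}(j_s)$, it is fixed by $j_s$ together with its order, so $M_s\models$ ``$\lhd(\eta)\simeq\mathbf{S}/(<s)$''. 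Reflecting this statement, taken with the parameter function $t\mapsto g_t^s$, through $\mathcal{U}(s)$ gives a set in $\mathcal{U}(s)$ of ordinals $\nu$ for which $\lhd(\nu)=\{g_t^s(\nu):t<s\}$ and $t\mapsto g_t^s(\nu)$ is an order isomorphism onto $\lhd(\nu)$; intersecting with $B_s$ produces $C_s$.

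The main obstacle is exactly the absoluteness of the Mitchell order between $V$ and $M_s$ for the measures $\mathcal{U}(u),\mathcal{U}(t)$ with $u,t<s$: one direction (from $\mathcal{U}(u)\lhd^V\mathcal{U}(t)$ to the same in $M_s$) is essentially Lemma \ref{representationn2}, but the converse, needed to reflect the order, requires an argument. I would settle both directions at once through the representation of Lemma \ref{representationn1}: for normal measures $\mathcal{W}_0,\mathcal{W}_1$ on $\eta$, $\mathcal{W}_0\lhd\mathcal{W}_1$ holds in any model with the correct $\mathcal{P}(\eta)$ if and only if there is $h\in V_{\eta+1}$ with $h(\nu)$ a normal measure on $\nu$ for $\mathcal{W}_1$-almost all $\nu$ and $\mathcal{W}_0=\{X\subseteq\eta:\{\nu:X\cap\nu\in h(\nu)\}\in\mathcal{W}_1\}$. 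Every quantifier and parameter in this characterization ranges over $V_{\eta+1}$ or $\mathcal{P}(\eta)$, all common to $V$ and $M_s$, so $\lhd$ is absolute between them and the order is reflected faithfully. The coherence recorded in Lemmas \ref{representationn2} and \ref{representationn3} is precisely what lets the witnessing functions $h$ be read off from the $g$'s, so no further bookkeeping is needed.
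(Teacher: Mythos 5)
Your proposal is correct, but it is organized differently from the paper's proof, which argues both parts by contradiction rather than by direct reflection: if the displayed set had $\mathcal{U}(s)$-measure zero, the paper chooses for ($\mathcal{U}(s)$-almost) every $\nu$ a witnessing normal measure $\mathcal{W}_\nu$ on $\nu$ (in part (b), one with $\mathcal{W}_\nu \notin \{g_t^s(\nu) : t<s\}$) and forms $\mathcal{W} = [\nu \mapsto \mathcal{W}_\nu]_{\mathcal{U}(s)}$; by \L{}o\'{s} and the $\eta$-closure of $M_s$ this is a genuine normal measure on $\eta$ lying in $M_s$, hence $\mathcal{W} \lhd \mathcal{U}(s)$, and in part (b) it differs from every $\mathcal{U}(t)$, $t<s$; either way this contradicts the arranged structure $\lhd(\eta) \simeq \mathbf{S}$ (minimality of $s$ in part (a), completeness of the enumeration below $\mathcal{U}(s)$ in part (b)). So both proofs rest on exactly the two pillars you isolate --- closure of $M_s$ and rigidity of $\lhd(\eta)$ --- and differ in which direction of \L{}o\'{s} carries the load. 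Your route buys two things: (i) it makes explicit the two-way absoluteness of the Mitchell order between $V$ and $M_s$, via the representation by functions $h \in V_{\eta+1}$, which the paper uses but leaves implicit (the proof of Lemma \ref{representationn2} simply asserts $M_s \models \mathcal{U}(u) \lhd \mathcal{U}(t)$, deferring to the ideas of \cite{magidor}); (ii) by reflecting the full order-isomorphism statement, with $\mathbf{S}/(<s)$ fixed by $j_s$ and the $g_t^s$'s packaged into a single function, you obtain injectivity and order-reflection of $t \mapsto g_t^s(\nu)$ as well, so $\lhd(\nu) \simeq \mathbf{S}/(<s)$ and the ``furthermore'' equality come out in one stroke; the paper's proof of (b) literally establishes only the equality $\lhd(\nu) = \{g_t^s(\nu) : t<s\}$ and tacitly relies on Lemma \ref{representationn2}, which gives order-preservation in one direction only, for the isomorphism. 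What the paper's contradiction route buys is brevity: proving the set equality that way needs no absoluteness of the order at that point (only upward absoluteness of normal-measure-hood) and no care about expressing the reflected statement with legitimate parameters, a point your version must and does address.
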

\begin{proof}
(a) Clearly, $\{\nu \in B_s: \nu \text{~is an inaccessible cardinal~}   \} \in \mathcal{U}(s).$
Now suppose by contradiction,
$Y=\{\nu \in B_s: \nu \text{~is a measurable cardinal~}   \} \in \mathcal{U}(s).$
For each $\nu \in Y$ pick some normal measure $\mathcal{W}_\nu$  on $\nu$ and set $\mathcal{W}=[\mathcal{W}_\nu: \nu \in Y]_{\mathcal{U}(s)}.$
Then $\mathcal{W}$ is a normal measure on $\nu$ and $\mathcal{W} \lhd \mathcal{U}(s)$. This contradicts our choice of the Mitchell order structure of
$\lhd(\eta)$.

(b) We show that
\[
\{  \nu \in B_s: \lhd(\nu) = \{ g_t^s(\nu): t < s                        \}              \} \in \mathcal{U}(s).
\]
Suppose not. Then there exists a measure one set $Y \in \mathcal{U}(s)$ such that for each
$\nu \in Y,$ there exists a  normal measure $\mathcal{W}_\nu$ on $\nu$ such that $\mathcal{W}_\nu \notin \{ g_t^s(\nu): t < s                        \}.$ Set $\mathcal{W}=[\mathcal{W}_\nu: \nu \in Y]_{\mathcal{U}(s)}.$
Then $\mathcal{W} \lhd \mathcal{U}_s$ is a normal measure on $\nu$ and $\mathcal{W} \neq \mathcal{U}(t)$  for all $t<s$. This contradicts our choice of the Mitchell order structure of
$\lhd(\eta)$ below $\mathcal{U}(s)$.
\end{proof}
Using Lemma \ref{representationn1}, and by shrinking the sets $C_s, s \in \mathbf{S}$, we may assume that:
\begin{enumerate}
\item[$(\beth)_3$] for all $t<s$ in $\mathbf{S}$ and all $\nu \in C_s,$
$C_t \cap \nu \in g_t^s(\nu).$
\end{enumerate}

Define $\Gamma: \mathbf{K}_n \to \mathcal{P}(\kappa)$ by
\begin{center}
 $\Gamma(s)=$ $\left\{
\begin{array}{l}
         \{\eta\}  \hspace{1.6cm} \text{ if } s=\langle \rangle,\\
         C_s  \hspace{1.75cm} \text{ if }s \neq \langle \rangle,\\
     \end{array} \right.$
\end{center}
\begin{lemma}
\label{gammaasrequired}
$\Gamma$ satisfies the requirements $(\dagger)_1$-$(\dagger)_4.$
\end{lemma}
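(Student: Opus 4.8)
The plan is to verify $(\dagger)_1$--$(\dagger)_4$ by a case analysis according to whether the node is the root $\langle\rangle$ or a proper node of $\mathbf{S}$, using throughout the structural description of $\lhd(\nu)$ for $\nu\in C_s$ supplied by \Cref{representationn5} together with the shrinking property $(\beth)_3$. First I would dispose of the two easy conditions. For $(\dagger)_1$, note $\Gamma(\langle\rangle)=\{\eta\}\neq\emptyset$. For $(\dagger)_2$, observe that for every $s\in\mathbf{S}$ we have $C_s\subseteq B_s\subseteq A_s\subseteq\eta$, so $\eta\notin C_t$ and hence $\Gamma(\langle\rangle)\cap\Gamma(t)=\emptyset$ for $t\neq\langle\rangle$; while for $s\neq t$ both in $\mathbf{S}$, disjointness of $\Gamma(s)=C_s$ and $\Gamma(t)=C_t$ is immediate from $C_s\subseteq A_s$, $C_t\subseteq A_t$ and $(\beth)_2$.

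For $(\dagger)_3$, suppose $s\lhd t$ in $\mathbf{K}_n$ and $\eta'\in\Gamma(s)$. If $s=\langle\rangle$ then $\eta'=\eta$, and $t\in\mathbf{S}$, so $\Gamma(t)\cap\eta=C_t\in\mathcal{U}(t)$, a normal measure on $\eta$; hence $C_t$ has positive $\mathcal{M}_\eta$-measure. If $s\neq\langle\rangle$, then $s,t\in\mathbf{S}$ and $s\lhd t$ means $t<s$, i.e.\ $\mathcal{U}(t)\lhd\mathcal{U}(s)$; in particular $s$ is not minimal, so \Cref{representationn5}(b) applies to $\eta'\in C_s$ and gives that $g_t^s(\eta')$ is a normal measure on $\eta'$. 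By $(\beth)_3$ we have $C_t\cap\eta'=\Gamma(t)\cap\eta'\in g_t^s(\eta')$, so $\Gamma(t)\cap\eta'$ has positive measure with respect to $\mathcal{M}_{\eta'}$, as required.

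The heart of the matter, and the step I expect to be the main obstacle, is $(\dagger)_4$, since it asks for membership in \emph{every} normal measure on $\eta'$, i.e.\ in $\mathcal{M}_{\eta'}$. The key observation is that the index set of the union matches the index set of the relevant measures: for $s\neq\langle\rangle$ one has $\{t:s\lhd t\}=\{t\in\mathbf{S}:t<s\}=\mathbf{S}/(<s)$. Thus for a non-maximal $s\neq\langle\rangle$ and $\eta'\in\Gamma(s)=C_s$, \Cref{representationn5}(b) tells us that the \emph{complete} list of normal measures on $\eta'$ is $\{g_t^s(\eta'):t<s\}$, and for each such $t$ we have $s\lhd t$, so $C_t$ is one of the sets entering the union, with $C_t\cap\eta'\in g_t^s(\eta')$ by $(\beth)_3$. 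Since $C_t\cap\eta'\subseteq\bigcup_{s\lhd t'}\Gamma(t')\cap\eta'$, the union lies in each $g_t^s(\eta')$ and hence in $\mathcal{M}_{\eta'}$. The case $s=\langle\rangle$ is handled separately: here $\eta'=\eta$, the normal measures on $\eta$ are exactly $\{\mathcal{U}(t):t\in\mathbf{S}\}$ by the choice $\lhd(\eta)\simeq\mathbf{S}$, and for each $t\in\mathbf{S}$ we have $C_t\in\mathcal{U}(t)$ with $C_t\subseteq\bigcup_{t'\in\mathbf{S}}C_{t'}$, so the union belongs to every $\mathcal{U}(t)$ and therefore to $\mathcal{M}_\eta$. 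This completes all four verifications; the only delicate point is ensuring that \Cref{representationn5}(b) is invoked to capture \emph{all} normal measures on $\eta'$, which is exactly what makes the intersection $\mathcal{M}_{\eta'}$ tractable.
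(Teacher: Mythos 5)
Your proof is correct and takes essentially the same approach as the paper's: the same case split on the root $\langle\rangle$ versus nodes of $\mathbf{S}$, with $(\dagger)_1$, $(\dagger)_2$ handled via $(\beth)_2$, and $(\dagger)_3$, $(\dagger)_4$ via $(\beth)_3$ together with the fact that Lemma~\ref{representationn5}(b) lists \emph{all} normal measures on $\nu \in C_s$, so that the union lands in the intersection $\mathcal{M}_\nu$. The only cosmetic differences are that for $(\dagger)_3$ you cite Lemma~\ref{representationn5}(b) where the paper cites Lemma~\ref{representationn2} to see that $g_t^s(\nu)$ is a normal measure on $\nu$ (both work, and yours is arguably cleaner since Lemma~\ref{representationn2} is stated for triples $u<t<s$), and your handling of the root in $(\dagger)_2$ is slightly more explicit.
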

\begin{proof}
Clearly clause $(\dagger)_1$ is satisfied as $\eta \in \Gamma(\langle \rangle)$ and clause $(\dagger)_2$
follows from $(\beth)_2$ and the fact that $C_s \subseteq A_s,$ for each $s \in \mathbf{S}$.

To show that clause $(\dagger)_3$ is satisfied, let $s  \lhd t$ be in $\mathbf{K}_n$ and $\nu \in \Gamma(s)$. If $s=\langle \rangle,$ then
$\nu=\eta$ and we have $\Gamma(t) =C_t \in \mathcal{U}(t)$, in particular,  $\Gamma(t)$ has positive measure with respect to $\mathcal{M}_\eta.$
 If $s \neq \langle \rangle,$ then by $(\beth)_3$, $\Gamma(t) \cap \nu=C_t \cap \nu \in g_t^s(\nu)$, and by Lemma \ref{representationn2}, $g_t^s(\nu)$ is a normal measure
on $\nu$. Thus $\Gamma(t) \cap \nu$ has positive measure with respect to $\mathcal{M}_\nu$, as required.

Finally to see that  clause $(\dagger)_4$ is satisfied, let $s \in \mathbf{K}_n$ be a non-maximal element and let $\nu \in \Gamma(s)$.  First suppose that $s=\langle \rangle.$ Then $\nu=\eta$, and
 \begin{itemize}
 \item[$(\eta)_1$] the only normal measures on $\eta$ are $\mathcal{U}(t), t \in \mathbf{S}$,

 \item[$(\eta)_2$] for all $t \in \mathbf{S}, C_t \in \mathcal{U}(t)$.
 \end{itemize}
 It immediately follow that
 \[
 \bigcup_{\langle \rangle \lhd t} \Gamma(t)=  \bigcup_{\langle \rangle \lhd t} C_t \in \bigcap_{t \in \mathbf{S}} \mathcal{U}(t)=\mathcal{M}_\eta.
 \]
Now suppose that $s \neq \langle \rangle.$ Then
\begin{itemize}
 \item[$(\nu)_1$] by Lemma \ref{representationn5}(b), the only normal measures on $\nu$ are $g_t^s(\nu)$ where $t \rhd s$,

 \item[$(\nu)_2$] by $(\beth)_3$, for all $s \lhd t, C_t \cap \nu \in g_t^s(\nu)$.
 \end{itemize}
Thus
 \[
 \bigcup_{s \lhd t} \Gamma(t) \cap \nu=  \bigcup_{s \lhd t} C_t \cap \nu \in \bigcap_{s \lhd t} g_t^s(\nu)=\mathcal{M}_\nu.
 \]
\end{proof}
Theorem \ref{maintheorem} follows.

\section{Concluding remarks}
\label{concludingremarks}
Although $\mathbf{GL}$ is not strongly complete with respect to Kripke semantics, interpreting $\lozenge$ as the derivative operator makes $\mathbf{GL}$ strongly complete over scattered spaces, specifically with respect to any ordinal $\alpha > \omega^\omega$ equipped  with the interval topology.
However, $\mathbf{GL}$ is not strongly complete concerning filter sequence of normal measures.
To see this consider the set
$\Sigma=\{\lozenge p_0\}\cup \{\Box(p_i\rightarrow\lozenge p_{i+1})\ |\ i<\omega\}$
and suppose that there is a valuation $\nu$ on $\vec{\mathcal{M}}$ such that
$\kappa$ satisfies $\Sigma$. Thus,
$\nu(p_i\rightarrow \lozenge p_{i+1})\in \mathcal{M}_\kappa$, for each $i<\omega$.
The truth of $\lozenge p_0$ and
$\Box p_0\rightarrow \lozenge p_1$
in $\kappa$ implies that there is a normal measure $\mathcal{U}_0$ on $\kappa$ such that
$\nu(p_0),\nu(\lozenge p_1)\in \mathcal{U}_0$.
Let $o(\mathcal{U}_0)=\alpha_0$. Then there exists a normal measure $\mathcal{U}_1$ such that $\nu(p_1), \nu(\lozenge p_2)\in \mathcal{U}_1$ and $o(\mathcal{U}_1)<o(\mathcal{U}_0)$.
By induction, we can see that for each $i$, there is a normal measure $\mathcal{U}_i$ such that $\nu(p_i), \nu(\lozenge p_{i+1})\in \mathcal{U}_i$ and
$o(\mathcal{U}_i)=\alpha_i<\alpha_{i-1}$. This gives a strictly decreasing sequence $\langle \alpha_i: i<\omega \rangle$
of ordinals, which is a contradiction.
Furthermore,  in \cite{aguilera2016} (Corollary 2.7), it is generally shown that $\mathbf{GL}$ is not strongly complete with respect to topologies on ordinals based on countably complete filters, such as club filters and measurable filters.

Note that we can consider a filter sequence of normal measures $\vec{\mathcal{M}_{\kappa}}$, the restriction of $\vec{\mathcal{M}}$ to any cardinal $\kappa$, as a topological space with a unique topology $\tau_{M}$ generated by the following sets:
\begin{itemize}
	\item if $\alpha<\kappa$ is not a measurable cardinal, then $\alpha$ is an isolated point,
	\item if $U\subseteq [0,\kappa]$, then $U\in \tau$ iff for any measurable cardinal $\alpha \in U$ there is $X\in \mathcal{M}_\alpha$ such that $X\subseteq U$.
\end{itemize}

For any $A\subseteq [0,\kappa]$, the set of limit points of $A$, denoted by $d(A)$, is the set of all ordinals $\alpha$ such that $A\cap\alpha$ has positive measure with respect to $\mathcal{M}_\alpha$.
Let $o(\alpha)=0$ if $\alpha$ is not measurable, then for any ordinal $\alpha\leq\kappa$ we have $\rho(\alpha)=o(\alpha)$, where $\rho$ is the derivative topological rank of the space $\vec{\mathcal{M}_{\kappa}}$, i.e, the least ordinal $\xi$ such that $\alpha\notin d^{\xi+1}(\vec{\mathcal{M}_{\kappa}})$.
Therefore, Corollary \ref{corollary} is obtained from Theorem \ref{maintheorem} for the space $(\kappa,\tau_{M})$ for sufficient large cardinal $\kappa$.

In \cite{aguilera2015} it is proved that for any given scattered space $\mathfrak{X}=(X,\tau)$ of sufficiently large derivative rank, $\mathbf{GL}$ is strongly complete with respect to $\mathfrak{X}_{+\lambda}=(X,\tau_{+\lambda})$ where  $\tau_{+\lambda}$ is a finer topology named Icard topology.
In particular, for filter sequence of normal measures and for $\lambda=1$, it is consistent that $\mathbf{GL}$ is strongly complete with respect to $(\kappa,\tau_{M+1})$ whenever
$\kappa$ is a measurable cardinal with $o(\kappa)\geq \omega^\omega+1$ and
$\tau_{M+1}$ is the generalized Icard topology, i.e., the least topology extending $\tau_{M}$ by adding all sets of the form
$\{\alpha<\kappa: \zeta<\rho(\alpha)\leq\xi\}$
for all $-1\leq\zeta<\xi\leq o(\kappa)$.
However, the strong completeness of \cite{aguilera2015} is based on the assumption that the set of propositional variables $\mathbb{P}$ is countable, and it remains open to find a natural topological space $\mathfrak{X}$ with respect to which $\mathbf{GL}$ is strongly complete based on uncountable language.

As it is shown in \cite{blass}, the incompleteness of $\mathbf{GL}$ with respect to club filters is equiconsistent with the existence of a Mahlo cardinal. However, the following question remains open.
\begin{question}
What is the exact consistency strength of `` $\mathbf{GL}$ is complete with respect to the filter sequence of normal measures ''$?$
\end{question}

As it is stated by Blass, for $\mathbf{GL}$ to be complete with respect to normal filters, we need the existence of measurable cardinals of all finite Mitchell orders, so the existence of some large cardinals is needed.
Our proof is based on a result from Ben-Neria \cite{omer2}, Theorem \ref{structure of mitchell orders-main}, although it seems that the assumption he has used is more than what we need, for now, this is the best possible result.
On the other hand, in order to make the article comprehensible to readers who are not familiar with the advanced concepts of set theory, we used a different and slightly stronger assumption than Ben-Neria's.
Note that we do not need our cardinals to be strong, but it suffices to be $\lambda$-strong for a suitable $\lambda$.
Also, as it is stated by Donder (see \cite{blass}), the existence of  large cardinals alone is not sufficient for our proof; for example, $\mathbf{GL}$ is incomplete  in the known canonical core models for strong cardinals, as in such models, for measurable cardinals $\kappa$ of Mitchell order $1$, $\mathcal{M}_\kappa$ is an ultrafilter, and this prevents $\mathbf{GL}$ from being complete.


\begin{thebibliography}{100}
	
	\bibitem{abashidze87}
	Merab Abashidze.
	\newblock Ordinal completeness of the G\"odel-L\"ob modal system.
	\newblock in: {\em Intensional Logics and the Logical Structure of Theories,}
	Metsniereba, Tbilisi, (1985), 49-73 (in Russian).
	
	\bibitem{aguilera2016}
	Juan~P Aguilera.
	\newblock A Topological Completeness Theorem for Transfinite Provability Logic.
	\newblock {\em arXiv preprint arXiv:1609.03074.},
	 (2016).
	
	\bibitem{aguilera2015}
	Juan~P Aguilera and David Fern{\'a}ndez-Duque.
	\newblock Strong completeness of provability logic for ordinal spaces.
	\newblock {\em  J. Symbolic Logic},
	 82 (2017), no. 2, 608–628.

 	

	
	\bibitem{bagaria2019}
	Joan Bagaria.
	\newblock Derived topologies on ordinals and stationary reflection.
	\newblock {\em  Trans. Amer. Math. Soc.},
	371 (2019), no. 3, 1981-2002.
	
    \bibitem{joosten}
	 Lev Beklemishev and Joost J. Joosten .
	\newblock Problems collected at the Wormshop 2012 in Barcelona.
	\newblock $http://www.mi$-$ras.ru/~bekl/Problems/worm_{-}problems.pdf$.

	\bibitem{beklemishev2009ordinal}
	Lev Beklemishev.
	\newblock Ordinal completeness of bimodal provability logic $GLB$.
	\newblock in: {\em International Tbilisi Symposium on Logic, Language, and
		Computation}, Springer, (2009), 1-15.
	
	\bibitem{beklemishev2013}
	Lev Beklemishev and David Gabelaia.
	\newblock Topological completeness of the provability logic $GLP$.
	\newblock {\em Ann. Pure Appl. Logic},
 164 (2013), no. 12, 1201-1223.
	
	\bibitem{beklemishev2014}
	Lev Beklemishev and David Gabelaia.
	\newblock Topological interpretations of provability logic.
	\newblock in: {\em Leo Esakia on duality in modal and intuitionistic logics},
	 Springer, (2014), 257-290.

	
	\bibitem{omer2}
	Omer Ben-Neria.
	\newblock The structure of the mitchell order-II.
	\newblock {\em  Ann. Pure Appl. Logic}, 166 (2015), no. 12, 1407-1432.
	
	\bibitem{omer1}
	Omer Ben-Neria.
	\newblock The structure of the mitchell order-I.
	\newblock {\em Israel J. Math.} 214 (2016), no. 2, 945–982.
	
	\bibitem{blass}
	Andreas Blass.
	\newblock Infinitary combinatorics and modal logic.
	\newblock {\em  J. Symbolic Logic}, 55 (1990), no. 2, 761-778.
	
	\bibitem{esakia}
	Leo Esakia.
	\newblock Diagonal constructions, L{\"o}b’s formula and Cantor’s scattered
	spaces.
	\newblock {\em Studies in logic and semantics},
	 Metsniereba, Tbilisi,  132 (1981),  128–143 (In Russian).

     \bibitem{fernandez14}
     David Fern{\'a}ndez-Duque.
     \newblock The polytopologies of transfinite provability logic.
     \newblock {\em Archive for Mathematical Logic 53}, 3-4 (2014), 385--431.

\bibitem{gitik}
Moti Gitik.
\newblock Prikry-type forcings.
\newblock {\em Handbook of set theory. Vols. 1, 2, 3, 1351–1447, Springer, Dordrecht, 2010.}

\bibitem{shelah}
	Leo Harrington and Saharon Shelah.
	\newblock Some exact equiconsistency results in set theory.
	\newblock {\em Notre Dame J. Formal Logic} 26 (1985), no. 2, 178–188.
	
	\bibitem{magidor}
	Menachem Magidor.
	\newblock Changing cofinality of cardinals.
	\newblock {\em Fund. Math.} 99 (1978), no. 1, 61–71.
	
	\bibitem{mitchell}
	William~J Mitchell.
	\newblock Sets constructible from sequences of ultrafilters.
	\newblock {\em  J. Symbolic Logic}, 39 (1974), 57–66.
	
	\bibitem{segerberg}
	Krister Segerberg.
	\newblock An essay in classical modal logic.
	\newblock {\em Filosofiska F\"oreningen och	Filosofiska Institutionen vid Uppsala Universitet}, 1971.
	
\end{thebibliography}
\end{document}